\documentclass[submission,copyright,creativecommons]{eptcs}


\usepackage{iftex}

\ifpdf
  \usepackage{underscore}         
  \usepackage[T1]{fontenc}        
\else
  \usepackage{breakurl}           
\fi

\title{Combs, Causality and Contractions in Atomic Markov Categories}
\author{Dario Stein \quad\quad M\'ark Sz\'eles
\institute{Radboud University Nijmegen, The Netherlands}
\email{\{dario.stein,mark.szeles\}@ru.nl}
}

\newcommand{\titlerunning}{Combs, Causality and Contractions}
\newcommand{\authorrunning}{D. Stein \& M. Sz\'eles}

\hypersetup{
  bookmarksnumbered,
  pdftitle    = {\titlerunning},
  pdfauthor   = {\authorrunning},
  pdfsubject  = {EPTCS},               
  pdfkeywords = {markov categories, categorical probability, traces, combs, causality}
}

\usepackage{amsmath,amssymb,amsfonts,stmaryrd,amsthm}
\usepackage{quiver}
\usepackage{hyperref}
\usepackage{tikz-cd}

\usepackage[export]{adjustbox}

\usepackage{tikzit}

\tikzstyle{morphism}=[fill=white, draw=black, shape=rectangle]
\tikzstyle{medium box}=[fill=white, draw=black, shape=rectangle, minimum width=0.8cm, minimum height=0.9cm]
\tikzstyle{large morphism}=[fill=white, draw=black, shape=rectangle, minimum width=1cm, minimum height=2cm]
\tikzstyle{large morphism2}=[fill=white, draw=black, shape=rectangle, minimum width=2cm, minimum height=1cm]
\tikzstyle{bn}=[fill=black, draw=white, shape=circle, inner sep=1.5pt]
\tikzstyle{wn}=[fill=white, draw=black, shape=circle, inner sep=1.5pt]
\tikzstyle{nn}=[fill=gray, draw=gray, regular polygon, regular polygon sides=3, minimum width=0.3cm, shape border rotate=180, inner sep=0pt]
\tikzstyle{state}=[fill=white, draw=black, regular polygon, regular polygon sides=3, minimum width=0.8cm, shape border rotate=180, inner sep=0pt]
\tikzstyle{effect}=[fill=white, draw=black, regular polygon, regular polygon sides=3, minimum width=0.8cm, shape border rotate=0, inner sep=0pt]
\tikzstyle{medium state}=[fill=white, draw=black, regular polygon, regular polygon sides=3, minimum width=1.3cm, inner sep=0pt, shape border rotate=180]
\tikzstyle{medium effect}=[fill=white, draw=black, regular polygon, regular polygon sides=3, minimum width=1.3cm, inner sep=0pt, shape border rotate=0]
\tikzstyle{large state}=[fill=white, draw=black, regular polygon, regular polygon sides=3, minimum width=2.2cm, shape border rotate=180, inner sep=0pt]
\tikzstyle{treenode}=[fill=white, draw=none, shape=circle]
\tikzstyle{hypergraph wire}=[fill=black, draw=white, shape=circle, inner sep=1.5pt]
\tikzstyle{hypergraph wire red}=[fill=red, draw=white, shape=circle, inner sep=1.5pt]

\tikzstyle{arrow}=[->]
\tikzstyle{dashedarrow}=[->, dashed, draw=gray]
\tikzstyle{trace}=[-, draw={rgb,255: red,132; green,132; blue,132}]
\tikzstyle{tracebox}=[-, fill={rgb,255: red,238; green,238; blue,238}, draw={rgb,255: red,173; green,173; blue,173}]
\tikzstyle{highlight}=[-, draw={rgb,255: red,12; green,42; blue,212}, dashed]

\usepackage{wrapfig}
\usepackage{subcaption}

\pgfdeclarelayer{edgelayer}
\pgfdeclarelayer{nodelayer}
\pgfsetlayers{edgelayer,nodelayer,main}
\tikzstyle{none}=[]

\tikzset{baseline=(current  bounding  box.center)}

\renewcommand{\tikzfig}[1]{
	\InputIfFileExists{#1.tikz}{}{\input{./graphics/#1.tikz}}
}

\tikzset{every picture/.append style={scale=0.3}}
\tikzset{every node/.style={scale=0.75}}


\newtheorem{theorem}{\textbf{Theorem}}
\newtheorem{lemma}[theorem]{\textbf{Lemma}}
\newtheorem{proposition}[theorem]{\textbf{Proposition}}
\newtheorem{corollary}[theorem]{\textbf{Corollary}}
\newtheorem{definition}[theorem]{\textbf{Definition}}
\newtheorem{example}[theorem]{Example}
\newtheorem{remark}[theorem]{Remark}


\usepackage{bm}

\newcommand{\cat}{\mathbf}
\newcommand{\catname}{\mathbf}
\newcommand{\C}{\cat C} 
\newcommand{\R}{\mathbb R}
\newcommand{\N}{\mathbb N}
\newcommand{\keyword}[1]{\mathrm{#1}}
\newcommand{\del}{\keyword{del}}

\newcommand{\id}{\keyword{id}}
\newcommand{\supp}{\keyword{supp}}

\newcommand{\scottbr}[1]{\llbracket{#1}\rrbracket}

\renewcommand{\id}{\keyword{id}}
\newcommand{\tre}[2]{\mathbf{tr}^{#1}_{#2}}
\newcommand{\trace}[1]{\tre{#1}{}}

\newcommand{\contr}[1]{\mathbf{contr}^{#1}}
\newcommand{\hgin}{\keyword{in}}
\newcommand{\hgout}{\keyword{out}}

\newcommand{\finstoch}{\catname{FinStoch}}
\newcommand{\stoch}{\catname{Stoch}}
\newcommand{\borelstoch}{\catname{BorelStoch}}
\newcommand{\gauss}{\catname{Gauss}}
\newcommand{\freemarkov}{\catname{FreeMarkov}}
\newcommand{\sets}{\catname{Sets}}
\newcommand{\hyp}{\catname{Hyp}}
\newcommand{\finhyp}{\catname{FinHyp}}

\newcommand{\mat}{\catname{Mat}}

\newcommand{\eq}{\keyword{e}}
\newcommand{\ff}{\keyword{f}}

\newcommand{\combe}[3]{\langle{#2}|{#3} \rangle_{#1}}
\newcommand{\comb}[2]{\combe{}{#1}{#2}}

\begin{document}
\maketitle

\begin{abstract}
We present a counterexample showing that Markov categories with conditionals (such as BorelStoch) need not validate a natural scheme of axioms which we call contraction identities. These identities hold in every traced monoidal category, so in particular this shows that BorelStoch cannot be embedded in any traced monoidal category. We remedy this under the additional assumption of atomicity: Atomic Markov categories validate all contraction identities, and furthermore admit a notion of trace defined for non-signalling morphisms. We conclude that atomic Markov categories allow for an intrinsic theory of combs without having to assume an embedding into a compact-closed category. 
\end{abstract}

\section{Introduction}\label{sec:introduction}

Markov categories with conditionals have emerged as a general and powerful framework for studying stochastic processes and notions such as conditioning and independence in an abstract way that generalizes reasoning in graphical models (e.g. \cite{Fritz2020b,Fritz2020,Fritz2020a,Fritz2021,Fritz2023c,Jacobs2020a,Stein2021}). An important challenge is to understand the \emph{implicational theory} of such categories: assuming some equation or factorization is valid, which further equations can be derived from this? For example, such quasi-identities lie at the heart of \emph{identifiability} in causal inference; see Figure~\ref{fig:combscausalinfererence} for a treatment of Pearl's front-door adjustment in such categorical terms. 


In this work, we are interested in a particular schema of implications which we call \emph{contraction identities}\footnote{we will shorten quasi-identities to identities from now on}. The simplest instance is the following implication for all morphisms $f_1, f_2 : X \otimes X \to Y$ 
\begin{equation} \scalebox{0.9}{\tikzfig{contr-identity}} \label{cc} \end{equation}

\noindent Every contraction identity is formed by connecting an outgoing wire (here $X$) to an incoming one and pulling the strings tight. This must be done in an acyclic way to make sure the resulting diagram is meaningful -- we need to avoid genuine feedback loops. We will give the full combinatorial definition of the contraction scheme in (\ref{def:contractionid}) using the language of free Markov categories \cite{Fritz2023b} and hypergraphs.

We show using a simple counterexample that \eqref{cc} does not hold in the category $\borelstoch$, despite it having conditionals. We remedy this under a common additional assumption called \emph{atomicity}. Common categories such as $\finstoch,\gauss$ and $\catname{SetMulti}$ are atomic, while $\stoch,\borelstoch$ and categories modelling fresh name generation are not. Our main theorem \ref{propcontractionids} states that atomic Markov categories with conditionals satisfy all contraction identities. As we will discuss next, this enables us to apply the convenient notions of causal traces and combs in such categories. 

\begin{figure}
	\begin{minipage}{0.6\textwidth}
		\[ \scalebox{0.8}{\tikzfig{contr-identity-trace}} \]
		\captionof{figure}{Derivation of \eqref{cc} using traces}
		\label{fig:derive_with_trace}
	\end{minipage}
	\begin{minipage}{0.3\textwidth}
		\[ \tikzfig{f-nonsig} \]
		\captionof{figure}{A morphism $f$ satisfies the non-signalling condition if there exists an $f_s$ such that the equation above holds.}
		\label{fig:non_signalling}
	\end{minipage}
\end{figure}

\paragraph{Causal Traces}

\begin{wrapfigure}{r}{0.4\textwidth}
	\[ \scalebox{0.9}{\tikzfig{trace-depiction}} \]	
	\captionof{figure}{Graphical depiction of the trace operator}
	\label{fig:trace_depiction}
\end{wrapfigure}

A \emph{trace} on a monoidal category $\C$ is an operator which assigns to every morphism $f : X \otimes W \to Y \otimes W$ a morphism $\tre W {X,Y}(f) : X \to Y$ which represents evaluating a feedback loop of $W$ into itself. We depict the trace operation as in Figure \ref{fig:trace_depiction}.

\noindent The trace satisfies a number of axioms which encode the desired properties of such a feedback operation (see Section~\ref{app:traces}), and the graphical calculus reflects these \cite{Selinger2011a}. We show that all contraction identities can be proved from the trace axioms (Proposition~\ref{proptracesoundness}). For example in \eqref{cc}, we can obtain the consequent equation via the trace on Figure \ref{fig:derive_with_trace}. 

As a consequence of our counterexample, $\borelstoch$ cannot be embedded in \emph{any} traced monoidal category. We then establish the following converse result, in trying to obtain a canonical notion of trace on an atomic Markov category. It is hopeless to attempt this for an arbitrary morphism $f : X \otimes W \to Y \otimes W$. It is possible, however, if $f$ satisfies the non-signalling condition, see Figure \ref{fig:non_signalling}. 

This means the input $W$ is not needed to compute the output $W$. So the feedback loop created by the trace remains causal. Our central Definition~\ref{def:ftrace} states that every atomic Markov category with conditionals admits a canonical notion of causal trace, that is a trace defined on non-signalling morphisms. We show that this construction satisfies a restricted version of the trace axioms, and compare with the notion of a partially traced category \cite{Haghverdi2005,Bagnol2015} and traced ideal \cite{Abramsky1999}. The properties of the causal trace suffice to derive all contraction identities (Propositions~\ref{proptracesoundness}~and~\ref{propcontractionids}).

\paragraph{Combs and Causal Inference}

\begin{figure}[h]
	\begin{equation*} \scalebox{0.9}{\tikzfig{causal-inference}} \end{equation*}	
	\caption{Determining the causal effect of $s$ on $c$: Pre-intervention (left) and post-intervention distribution (right). Identifiability means any factorization on the left gives the same post-intervention distribution. We refer to \cite{Jacobs2019} for the details of this example.}\label{fig:combscausalinfererence}
\end{figure}

Combs are a useful tool for studying decompositions of string diagrams into more flexible shapes, with applications in quantum theory and causality \cite{Chiribella2008,kissinger2019,Hefford2022,Roman2020,Roman2020a,Jacobs2019}. For example, Figure~\ref{fig:combscausalinfererence} expresses Pearl's famous \emph{front-door adjustment} \cite[\S3.3.2]{pearl2009causality} using combs and string diagram surgery. 

\noindent Informally, a comb $C$ of type $(A,A') \to (B,B')$ is a diagram of type $A \to A'$ which features a \emph{hole} of type $B \to B'$. This hole can be filled by appropriate morphisms $h$, leading to a composite $C[h]$. This way, a comb describes a second-order process and lends itself to be rendered in an evocative shape (see Figure~\ref{fig:combinsertion}).

\begin{figure}[h]
	\begin{subfigure}{0.3\textwidth}
		\[ \scalebox{0.8}{\tikzfig{ex-combs}} \]
		\caption{Comb insertion (graphically)}
		\label{fig:combinsertion}	
	\end{subfigure}
	\begin{subfigure}{0.25\textwidth}
		\[ \scalebox{0.8}{\tikzfig{comb-intensional}} \]
		\caption{Intensional presentation of a comb as a pair $\comb f g$}
	\end{subfigure}
	\begin{subfigure}{0.45\textwidth}
			\begin{equation} \scalebox{0.8}{\tikzfig{combs-contr}} \label{eq:comb-axiom} \end{equation}	
		\caption{Comb insertion as a contraction identity}
		\label{fig:combinsertion_contrid}
	\end{subfigure}
	\caption{Combs and comb insertion}
\end{figure}

\noindent There are various inequivalent ways of formally defining what a comb is. An extensional definition is to define a comb as a morphism $A \otimes B' \to A' \otimes B$ in $\C$ which is non-signalling from $B'$ to $B$. An intensional definition is as a pair of morphisms $\comb f g$ with $f : A \to E \otimes B$ and $g : E \otimes B' \to A'$, under some kind of equivalence relation. We review these notions in Section~\ref{sec:combs} following \cite{Hefford2022}.

Relating the different definitions of combs to each other is tricky: To go from the extensional definition to comb insertion is an instance of a contraction identity (Figure \ref{fig:combinsertion_contrid}).	

For this reason, the theory of combs has commonly been developed in the setting of compact closed categories. There, second-order processes can be reduced to first-order processes by bending wires, and the various notions of comb are equivalent. The downside is that even when analyzing diagrams in a Markov category $\C$ using combs, we must assume that it comes with an embedding into a compact closed category (for example $\finstoch \hookrightarrow \mat(\R^+)$). Not only is this difficult for practical categories (such an embedding category requires developing a theory of exact conditioning, e.g. \cite{Lavore2023,Stein2021c}), but our counterexample shows that this is generally impossible in the absence of the atomicity axiom: Identity \eqref{eq:comb-axiom} is invalid in $\borelstoch$. We argue that causal traces are sufficient to develop an intrinsic theory of combs. We prove that in every atomic Markov category with conditionals, the extensional and intensional definitions of combs are equivalent.

\paragraph{Contributions}

This work is a shared refinement of three different developments \cite{HoughtonLarsen2021,Hefford2022,Fritz2023a}. The major starting point is \cite{HoughtonLarsen2021}, where the author develops a comprehensive theory of causality and contraction. Their key statement is Lemma 4.2.5, which they prove under the assumption of \emph{universal dilations}. By focusing on Markov categories, we obtain a new entry point into their developments, as we prove Lemma 4.2.5 in our Lemma \ref{lem:atomic-contr} under a different set of assumptions. Using that lemma as a starting point, we rederive a streamlined and self-contained presentation of their constructions which focuses more explicitly on connections with the literature on traced categories. 

We then connect our theory to the different notions of combs compared in \cite{Hefford2022}. We generalize those results significantly from the compact-closed case to a large family of Markov categories, which removes the need to assume an embedding into a compact-closed category as done in \cite{Jacobs2019}. We briefly return to the relationship between universal dilations and combs in Proposition \ref{prop:univ-dil-optic}. Our main contributions are the following.
\begin{enumerate}
\item We recognize the role of the atomicity axiom, which has been introduced in \cite{Fritz2023a}. Our theorems are proven under the assumption of conditionals and atomicity, which are weaker than universal dilations.
\item Our counterexample (\ref{example:borelstochnotatomic}) gives a precise reason for the failure of universality in $\borelstoch$. 
\item The combinatorics of the contraction identities are elegantly phrased using free Markov categories.
\end{enumerate}

\vspace{-0.5cm}

\paragraph{Acknowledgements}

We'd like to thank many people for fruitful discussions, in particular Dylan Braithwaite, Tobias Fritz, Tom{\'a}\v{s} Gonda, Nicholas Gauguin Houghton-Larsen, and Nathaniel Virgo.

\section{Markov Categories}\label{app:markov}

This introductory material is taken from \cite{Fritz2020b}. A \emph{gs-monoidal category} or \emph{CD-category} (due to \cite{Cho2019}) is a symmetric monoidal category $(\C,\otimes,I)$ where every object is coherently equipped with the structure of a commutative comonoid $\Delta_X : X \to X \otimes X$, $\del_X : X \to I$. We render these graphically as on Figure \ref{fig:markov_structure}.

\begin{figure}[ht]
	\begin{minipage}{0.45\textwidth}
		\[ {\tikzfig{markov-structure}} \]
		\caption{Graphical depiction of copy and delete maps}
		\label{fig:markov_structure}
	\end{minipage}
	\hspace{0.05\textwidth}
	\begin{minipage}{0.45\textwidth}
		\[ {\tikzfig{def-det}} \]
		\caption{A map $f$ is \emph{copyable} (resp. \emph{discardable}) if the equation on the left (resp. right holds)}
	\end{minipage}
\end{figure}

 \noindent A \emph{Markov category} is a CD category in which every morphism is discardable. Equivalently, this means $\del$ is a natural transformation, and therefore $I$ is a final object. Copying is not assumed to be natural. From now on, let $\C$ be a Markov category. 
%
%

We say that $\C$ \emph{has conditionals} if for every $f : A \to X \otimes Y$ there exists $f|_X : X \otimes A \to Y$ such that the left equation of Figure \ref{fig:cond_bayesianinv} holds.

For morphisms $p : A \to X$ and $f : X \to Y$, a \emph{Bayesian inverse} is a morphism $f^\dagger_p : A \otimes Y \to X$ such that the right equation on Figure \ref{fig:cond_bayesianinv} holds.

\begin{wrapfigure}{r}{0.6\textwidth}
	\begin{minipage}{0.27\textwidth}
		\[ \scalebox{0.8}{\tikzfig{def-cond}} \]		
	\end{minipage}
	\begin{minipage}{0.27\textwidth}
		\[ \scalebox{0.8}{\tikzfig{def-dagger}} \]	
	\end{minipage}
	\caption{Conditionals and Bayesian inverses}
	\label{fig:cond_bayesianinv}
\end{wrapfigure}

Conditionals and Bayesian inverses are mutually interdefinable, and we will be using both in the present article. 

\subsection{Example Categories}

We briefly recall the Markov categories of interest, mainly to establish notation. For the full definitions, we ask the reader to consult the references.

\begin{example}
	The Markov category $\finstoch$ consists of
	\begin{enumerate}
		\item objects are finite sets $X$
		\item morphisms $p : X \to Y$ are stochastic matrices, with entries written $p(y|x) \in [0,1]$, subject to the axiom
		\[ \forall x \in X, \sum_{y \in Y} p(y|x) = 1 \]
		\item composition is matrix multiplication, a.k.a. the Kolmogorov-Chapman equation
		\[ (gf)(z|x) = \sum_y g(z|y)f(y|x)\]
	\end{enumerate}
\end{example}
\noindent The compact-closed category $\mat(\R^+)$ is defined like $\finstoch$ but allows arbitrary matrices with nonnegative entries as morphisms. $\finstoch$ corresponds to the subcategory of $\mat(\R^+)$ of morphisms which are discardable. 

\begin{example}
	The Markov category $\borelstoch$ consists of
	\begin{enumerate}
		\item morphisms are standard Borel spaces $(X,\Sigma_X)$
		\item morphisms $p : X \to Y$ are Markov kernels $p : X \times \Sigma_Y \to [0,1]$. We write $p(x)$ for the measure $p(x,-) : \Sigma_Y \to [0,1]$ on $Y$. 
		\item composition is Lebesgue integration
		\[ (gf)(x,E) = \int g(y,E) f(x,\mathrm dy)\]
	\end{enumerate}
\end{example}
\noindent For $x \in X$, we write $\delta_x$ for the Dirac distribution centered at $x$. We can consider every measurable function $f: (X,\Sigma_X) \to (Y,\Sigma_Y)$ as a $\borelstoch$ morphism $\delta_f$ defined by $\delta_f(x) = \delta_{f(x)}$. As a slight abuse of notation, we will often write $f$ for both the function and its induced Markov kernel. \\

\noindent Markov categories capture a variety of flavors of probability and nondeterminism. The following categories will only be mentioned in examples, so we give the references: $\catname{SetMulti}$ (sets and left-total relations) \cite{Fritz2020b}, $\catname{TychStoch}$ (Tychonoff spaces and continuous Markov kernels) \cite{Fritz2023a}, $\catname{Gauss}$ (Markov kernels built from multivariate normal distributions and linear maps) \cite{Fritz2020b}.

\section{Traced Monoidal Categories}\label{app:traces} 

We briefly recall the notion of a traced monoidal category \cite{Joyal1996}.

\noindent
A traced monoidal category is a symmetric monoidal category $\C$ together with a family of operators
\[ \tre W {X,Y} : \C(X \otimes W,Y \otimes W) \to \C(X,Y) \]
satisfying the axioms depicted in Figure \ref{fig:trace_axioms}. We will omit the grey shading of the trace boxes when it is clear from context how to interpret the trace.
\begin{figure}[ht]
	\begin{subfigure}{0.45\textwidth}
		\[ \scalebox{0.8}{\tikzfig{tightening-axiom}} \]
		\caption{Tightening (naturality in $X,Y$): For all $f : X \otimes W \to Y \otimes W$, $g : X' \to X$, $h : Y \to Y'$, we have 
			$\trace W((h \otimes \id_W) \circ f \circ (g \otimes W)) = h \circ \trace W(f) \circ g$ }
	\end{subfigure}
	\hspace{0.1\textwidth}
	\begin{subfigure}{0.45\textwidth}
		\[ \scalebox{0.8}{\tikzfig{sliding-axiom}} \]
		\caption{Sliding (dinaturality in $W$): For all $f : X \otimes W \to Y \otimes W'$ and $g : W' \to W$, we have
			$\trace {W}((\id_Y \otimes g) \circ f) = \trace {W'}(f \circ (\id_X \otimes g))$ }
	\end{subfigure}
	\begin{subfigure}{0.45\textwidth}
		\[ \scalebox{0.8}{\tikzfig{vanishing-axiom}} \]
		\caption{Vanishing (coherence with $I$). For all $f : X \otimes I \to Y \otimes I$, we have
			$\trace I(f) = (X \xrightarrow{\rho_X^{-1}} X \otimes I \xrightarrow f Y \otimes I \xrightarrow{\rho_Y} Y)$}
	\end{subfigure}
	\hspace{0.1\textwidth}
	\begin{subfigure}{0.45\textwidth}
		\[ \scalebox{0.8}{\tikzfig{associativity-axiom}} \]
		\caption{Associativity (coherence with $\otimes$): For all $f : X \otimes U \otimes V \to Y \otimes U \otimes V$ we have 
			$\trace {U \otimes V}(f) = \trace U(\trace V(f))$}
	\end{subfigure}
	\begin{subfigure}{0.45\textwidth}
		\[ \scalebox{0.8}{\tikzfig{superposition-axiom}} \]
		\caption{Superposition (strength): For all $f : X \otimes W \to Y \otimes W$ and $g : X' \to Y'$, we have 
			$\trace W(g \otimes f) = g \otimes \trace W(f)$}
	\end{subfigure}
	\hspace{0.1\textwidth}
	\begin{subfigure}{0.45\textwidth}
		\[ \scalebox{0.8}{\tikzfig{yanking-axiom}} \]
		\caption{Yanking: $\trace W(\keyword{swap}_{W,W}) = \id_W$}
	\end{subfigure}
	\caption{Axioms of trace}
	\label{fig:trace_axioms}
\end{figure}

\section{Atomic Markov Categories}\label{sec:atomicmc}

We recall the notion of atomicity in Markov categories, as well as the prerequisite notions of almost sure equality and absolute continuity found in \cite{Fritz2023a}. Proofs of our propositions are to be found in the appendix. Let $\C$ be a Markov category. 

\begin{wrapfigure}{r}{0.3\textwidth}
	\[ \scalebox{0.8}{\tikzfig{def-ase}} \]
	\caption{Almost sure equality}
	\label{fig:almost_sure_equality}
\end{wrapfigure}

\begin{definition}[Almost sure equality -- {\cite{Fritz2023a}}]
Given a morphism $p : A \to X$, we say $f_1,f_2 : W \otimes X \to Y$ are \emph{$p$-almost surely equal} (written $f_1 =_p f_2$) if the equation in Figure \ref{fig:almost_sure_equality} holds.
\end{definition}

\begin{definition}[Absolute continuity -- {\cite{Fritz2023a}}]
For two morphisms $p : A \to X$ and $q : B \to X$, we say that $p$ is \emph{absolutely continuous with respect to $q$} (written $p \ll q$), if $f_1 =_q f_2$ implies $f_1 =_p f_2$ for all $f_1,f_2 : W \otimes X \to Y$.
\end{definition}
\noindent We note the quantification over arbitrary $W$ which is stronger than earlier definitions (e.g. \cite{Fritz2020a}). These abstract notions capture the usual definitions of almost sure equality in our example categories (see \cite[Section~2.2]{Fritz2023a}):
\begin{enumerate}
\item in $\finstoch$, $f_1,f_2$ are $p$-almost surely equal if $f_1(y|x) = f_2(y|x)$ for all $x$ with $p(x|a)>0$ for some $a \in A$. We have $p \ll q$ if $\supp(p) \subseteq \supp(q)$ where
\[ \supp(p) = \{ x \in X : \exists a \in A, p(x|a) > 0 \} \]
\item in $\borelstoch$, $f_1,f_2$ are $p$-almost surely equal if the set $D=\{ x : f_1(x) \neq f_2(x) \}$ satisfies $p(D|a) = 0$ for all $a \in A$. We have $p \ll q$ if for every every measurable subset $S \subseteq X$, we have
\[ (\forall b \in B, q(S|b) = 0) \Rightarrow \forall a \in A, p(S|a) = 0 \]
\end{enumerate}

\begin{definition}[Atomicity -- \cite{Fritz2023a}]\label{def:atomic}
We call a morphism $p : A \to X$ \emph{atomic} if $\Delta_X \circ p \ll p \otimes p$. We call the Markov category $\C$ atomic if every morphism in it is atomic. 
\end{definition}

\noindent The following characterizations are known from \cite{Fritz2023a}, but we repeat them because they are instructive.

\begin{example}
For every morphism $p : A \to X$ in $\finstoch$, we have 
\[ (p \otimes p)(x_1,x_2|a_1,a_2) = p(x_1|a_1)p(x_2|a_2), \qquad (\Delta_X \circ p)(x_1,x_2|a) = \begin{cases} 
p(x_1|a), &x_1 = x_2 \\
0, &\text{otherwise}
\end{cases} \]
so it holds that $\supp(\Delta_X \circ p) \subseteq \supp(p \otimes p)$. Therefore $\finstoch$ is atomic.
\end{example}

\noindent The name \emph{atomicity} suggests that this property fails for distributions which are atomless, such as the Lebesgue measure. For a morphism $p : A \to X$ in $\borelstoch$, we define its set of atoms $\mathcal A \subseteq X$ as 
\[ \mathcal A = \{ x \in X : \exists a \in A, p(\{x\}|a) > 0 \}\]
We call $p$ \emph{completely atomic} if $p(\mathcal A|a) = 1$ for all $a \in A$, i.e. its probability mass is fully concentrated on its atoms. It is shown in \cite[Theorem~3.2.7]{Fritz2023a} that $p$ is atomic in the sense of Definition~\ref{def:atomic} if and only if it is completely atomic. It is easy to see that the Lebesgue measure $\nu$ on the interval $[0,1]$ is \emph{not} atomic, as its set of atoms $\mathcal A$ is empty. We will elaborate a concrete counterexample, showing that $\Delta \circ \nu \not \ll \nu \otimes \nu$.

\begin{example}\label{example:borelstochnotatomic}
$\borelstoch$ is not atomic. 
\end{example}
\begin{proof}
Let $X=[0,1]$ and $\nu : I \to X$ be the Lebesgue measure. We will construct two measurable functions which are almost-sure equal with respect to $\nu \otimes \nu$ (the Lebesgue measure on the square), but not with respect to $\Delta \circ \nu$ (the pushforward measure on the diagonal). 

We take these functions to be the characteristic functions of the empty set and the diagonal respectively, i.e. $\eq, \ff : X \otimes X \to \{0,1\}$ given by $\eq(x,y)=[x=y]$, $\ff(x,y)=0$.
Then $\eq =_{\nu \otimes \nu} \ff$, because the set $\{ (x,y) : \eq(x,y) \neq \ff(x,y)\}$ has measure $0$ under $\nu \otimes \nu$. The following equality holds in string diagrams
\[ \scalebox{0.8}{\tikzfig{borelstoch-atomic-1}}\] 
However $\eq \neq_{\Delta \circ \nu} \ff$, because the set $\{ (x,y) : \eq(x,y) \neq \ff(x,y)\}$ has measure $1$ under $\Delta \circ \nu$! Similarly, the following string diagrams are \emph{not} equal.
\[ \scalebox{0.8}{\tikzfig{borelstoch-atomic-2}}\]
This shows $\Delta \circ \nu \not \ll \nu \otimes \nu$. 
\end{proof}

\noindent As a simple corollary, we obtain the following:

\begin{example}\label{example:borelstoch-no-contraction}
$\borelstoch$ does not satisfy the contraction identity \eqref{cc} from the introduction.
\end{example}
\begin{proof}
\[ \scalebox{0.8}{\tikzfig{contr-identity-borelstoch}} \]
\end{proof}

\noindent This type of counterexample can easily be adapted to other Markov categories which feature atomless distributions, such as quasi-Borel spaces or nominal sets (\cite{Stein2021}). As we will see, such counterexamples are impossible when the category in question is atomic.

A subtle caveat is that the notion of atomicity is dependent on the surrounding Markov category: The standard normal distribution $\mathcal{N}(0,1)$ is non-atomic in $\borelstoch$, but it \emph{is} atomic in the subcategory $\catname{Gauss}$. More generally, the category $\catname{TychStoch}$ of Tychonoff spaces and continuous Markov kernels is atomic \cite{Fritz2023a}, despite featuring atomless measures: Note that the map $\eq$ from the counterexample \ref{example:borelstoch-no-contraction} fails to be continuous. \\

\noindent We return to the abstract properties of atomic morphisms.
\begin{proposition}\label{prop:atomicmorphisms}
In a Markov category $\C$, the following morphisms $p$ are always atomic
\begin{enumerate}
\item morphisms of `full support', in the sense that $f_1 =_p f_2$ implies $f_1 = f_2$.
\item deterministic morphisms
\item if $\C$ satisfies the causality axiom (\cite[11.31]{Fritz2020b}), $g$ is deterministic, and $p$ is atomic, then the composite $g \circ p$ is atomic
\end{enumerate}
\end{proposition}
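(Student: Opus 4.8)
The plan is to reduce all three statements to two monotonicity properties of absolute continuity. The first is a \emph{precomposition} principle: for any $m : A \to X$ and any $h : A' \to A$ one has $m \circ h \ll m$. This is immediate from the definitions, since the defining equation of $f_1 =_m f_2$ is an equation of morphisms out of $A \otimes W$, and precomposing both sides with $h \otimes \id_W$ turns it verbatim into the defining equation of $f_1 =_{m \circ h} f_2$; hence $f_1 =_m f_2 \Rightarrow f_1 =_{m \circ h} f_2$, i.e.\ $m \circ h \ll m$. The second is a \emph{postcomposition} principle for deterministic maps, which is the substantial one and which I discuss last.

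Part (2) falls out immediately: if $p$ is deterministic then $\Delta_X \circ p = (p \otimes p) \circ \Delta_A$, so the precomposition principle with $m = p \otimes p$ and $h = \Delta_A$ gives $\Delta_X \circ p \ll p \otimes p$, i.e.\ $p$ is atomic; no further axioms are needed. For part (1), I would first observe that every morphism is absolutely continuous with respect to a full-support morphism: if $s$ has full support then $f_1 =_s f_2 \Rightarrow f_1 = f_2 \Rightarrow f_1 =_q f_2$ for all $q$ of the same type, so $q \ll s$. It therefore suffices to show that full support is stable under $\otimes$, so that $p \otimes p$ again has full support and $\Delta_X \circ p \ll p \otimes p$ holds trivially. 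Stability under $\otimes$ I would prove by checking almost-sure equality with respect to $p \otimes p$ one tensor factor at a time, using that $=_p$ is trivial on each factor; this is a routine manipulation of the graphical definition, and like part (2) needs no causality.

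For part (3), I would first rewrite both sides using that $g$, and hence $g \otimes g$, is deterministic: $\Delta_Y \circ (g \circ p) = (g \otimes g) \circ \Delta_X \circ p$ and $(g \circ p) \otimes (g \circ p) = (g \otimes g) \circ (p \otimes p)$. Since $p$ is atomic we have $\Delta_X \circ p \ll p \otimes p$, so the claim reduces to the postcomposition principle: \emph{if $k$ is deterministic and $r \ll s$, then $k \circ r \ll k \circ s$}, applied with $k = g \otimes g$. To prove this I would set $\tilde f_i := f_i \circ (\id_W \otimes k)$ and establish the chain $f_1 =_{k \circ s} f_2 \Rightarrow \tilde f_1 =_s \tilde f_2 \Rightarrow \tilde f_1 =_r \tilde f_2 \Rightarrow f_1 =_{k \circ r} f_2$, where the middle step is the hypothesis $r \ll s$ and the last step uses only determinism of $k$ (substituting $\Delta \circ k = (k \otimes k) \circ \Delta$ into the defining diagram and reading off the result). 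The first step is the crux.

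The main obstacle is exactly this first implication $f_1 =_{k \circ s} f_2 \Rightarrow \tilde f_1 =_s \tilde f_2$. Unfolding the definitions, the hypothesis is an almost-sure equality in which the \emph{retained} copy of the $s$-output has been pushed through the deterministic map $k$, whereas the conclusion retains the output itself; one must therefore argue that the extra information kept by retaining the full output, rather than its $k$-image, cannot separate $\tilde f_1$ from $\tilde f_2$, precisely because both continuations factor through the same $k$. This ``stripping of a deterministic map off a copied wire'' is what the \emph{causality axiom} supplies, and it is the only place an axiom beyond the plain Markov structure is invoked --- consistent with the fact that parts (1) and (2) hold in every Markov category while (3) is stated under causality. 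That some such hypothesis is genuinely necessary is witnessed by $\borelstoch$, where the failure of atomicity already breaks the analogous contraction identities (Example~\ref{example:borelstoch-no-contraction}).
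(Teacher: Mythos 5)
Your proposal is correct and, despite the more abstract packaging, takes essentially the same route as the paper: part (2) is determinism plus invariance of almost-sure equality under precomposition, and part (3) runs the paper's exact three steps --- causality to pull the almost-sure equality back from the outputs of $g \otimes g$ to those of $p \otimes p$ (the strengthening lemma the paper cites as \cite[A.14]{Fritz2023}), then atomicity of $p$, then determinism of $g$ to push the retained wires forward again --- with your postcomposition principle ($k$ deterministic and $r \ll s$ implies $k \circ r \ll k \circ s$) being a clean reusable formulation of what the paper does graphically. Your elaboration of part (1) via tensor-stability of full support is also a faithful unfolding of the paper's ``immediate'' claim, and correctly exploits the quantification over the auxiliary object $W$ built into the definition of almost-sure equality.
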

\noindent Note that atomic morphisms are generally not closed under composition, and atomicity cannot be checked on points (see Example~\ref{ex:atomic-no-composition}). We now show that the atomicity axiom plus conditionals imply a basic contraction identity. These will be sufficient to prove all of them (\ref{sec:causaltraces-laws}). We need the following technical notion to rule out pathological cases:

\begin{definition}
An object $W$ of a Markov category is called \emph{cancellable} if $\del_W \otimes f_1 = \del_W \otimes f_2$ implies $f_1 = f_2$. We call the Markov category \emph{cancellative} if every object $W$ is cancellable.
\end{definition}
\noindent This condition is called \emph{normality} in \cite{HoughtonLarsen2021}. In practical examples, most objects are cancellable, for example any object that admits a state $I \to W$. In $\finstoch$ and $\borelstoch$, every object except $W=\emptyset$ is cancellable. We may formally restrict our attention to sub-Markov categories $\finstoch^*, \borelstoch^*$ on non-empty objects, or simply assume cancellability when needed. 

\begin{lemma}\label{lem:disint-indep}
In an atomic Markov category, we have for all $f_1,f_2 : X \to W$ and $g_1,g_2 : W  \otimes X \otimes W \to Y$ that
\[ \scalebox{0.8}{\tikzfig{lem-statement}} \]
\end{lemma}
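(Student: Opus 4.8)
The plan is to reduce the claim to a single application of the atomicity axiom, read in its almost-sure form $\Delta \circ p \ll p \otimes p$: this is exactly the tool that lets one pass from an equation holding on two \emph{independent} copies of a wire to the same equation on the \emph{diagonal}, where the two copies are forced to agree. After unfolding, the premise of the lemma should be an almost-sure equality of the two configurations built from $(f_1,g_1)$ and $(f_2,g_2)$ in which the two $W$-legs of $g_i$ are produced independently, and the conclusion the corresponding equality where these legs are correlated through a shared copy of $X$. The middle $X$-leg of $g_i$ carries no essential weight: it is simply copied along as side information.

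First I would put both premise and conclusion into a normal form $h_i \circ (p \otimes p)$ respectively $h_i \circ (\Delta \circ p)$, where $p$ is the morphism producing the pair of $W$-legs — built from $f_1,f_2$ and a copy of $X$ — and $h_i$ is the residual reassembled from $g_i$ together with the comonoid structure. Reaching this form uses only naturality of $\Delta$ and $\del$, the comonoid and counit laws, and the elementary fact that precomposition can only shrink absolute-continuity classes, i.e. $q \circ r \ll q$, so that any shared wire behaves as a genuine diagonal. Once in this shape, the premise reads $h_1 =_{p \otimes p} h_2$.

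The second step is then immediate: atomicity gives $\Delta \circ p \ll p \otimes p$, so $h_1 =_{p \otimes p} h_2$ upgrades to $h_1 =_{\Delta \circ p} h_2$, which is precisely the conclusion; a final appeal to cancellability (introduced just above) discards any auxiliary wire and recovers the stated form. The real obstacle lies entirely in the first step — choosing $p$ and the residuals $h_i$ so that the independent configuration is literally $h_i \circ (p \otimes p)$ and the correlated one literally $h_i \circ (\Delta \circ p)$, and checking that this graphical rewriting matches the algebraic definition of $=_{p \otimes p}$. Once the diagrams are in this normal form, the atomicity step and the cancellability clean-up require no further ideas.
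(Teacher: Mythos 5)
Your overall skeleton matches the paper's proof: recast the hypothesis as an almost-sure equality with respect to $p \otimes p$ for a suitable reference morphism $p$, upgrade it to a $(\Delta \circ p)$-almost-sure equality by atomicity, and marginalize. But there is a genuine gap in your first step, exactly the one you yourself flag as ``the real obstacle''. An almost-sure equality $h_1 =_{p \otimes p} h_2$ requires a \emph{single} reference morphism $p$ common to both sides, whereas the hypothesis of the lemma has $f_1$ on one side and $f_2$ on the other. Your description of $p$ as ``built from $f_1,f_2$ and a copy of $X$'' hides this. If $p$ uses only one of the $f_i$, the other side of the hypothesis is simply not of the required shape. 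If instead $p$ carries both legs, say $p = (\id_X \otimes f_1 \otimes f_2) \circ \Delta_X^{(3)}$ with $h_1$ reading off the $f_1$-legs and $h_2$ the $f_2$-legs, then the hypothesis does \emph{not} imply $h_1 =_{p \otimes p} h_2$: that almost-sure equality demands equality of joints in which both the $f_1$- and the $f_2$-wires of each independent copy are retained as outputs, with $Y$ correlated to the $f_1$-wires on the left but to the $f_2$-wires on the right. Substituting the hypothesis into a larger diagram forces the \emph{same} surrounding context on both sides, so no amount of whiskering with $\Delta$, $\del$ and the fact $q \circ r \ll q$ produces this statement; deriving it is essentially as hard as the lemma itself.

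The missing idea is the paper's opening move: marginalize $Y$ in the hypothesis (the $g_i$ are discardable, so they vanish) and use cancellability of $W$ to remove the dangling tensor factor, concluding $f_1 = f_2 =: f$. Only then is the single reference morphism $\phi = (\id_X \otimes f) \circ \Delta_X$ available, and your step 1 goes through as planned: the hypothesis, having a \emph{free} $W$-input, can be fed an independent copy of $f$ and whiskers to $c_1 =_{\phi \otimes \phi} c_2$ with $c_i = \del_X \otimes g_i$, after which atomicity and marginalization finish the proof exactly as you describe. Note also that your appeal to cancellability is misplaced: you invoke it at the end ``to discard auxiliary wires'', but discarding \emph{output} wires is mere post-composition with $\del$ and needs no hypothesis; cancellability is needed at the \emph{beginning}, for the implication $\del_W \otimes f_1 = \del_W \otimes f_2 \Rightarrow f_1 = f_2$. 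With these repairs your argument coincides with the paper's proof.
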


\begin{lemma}\label{lem:atomic-contr}
Let $\C$ be an atomic Markov category with conditionals. Then
\begin{equation*} \scalebox{0.8}{\tikzfig{combs-contr}} \end{equation*}
\end{lemma}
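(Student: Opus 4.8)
The plan is to reduce the comb contraction identity \eqref{eq:comb-axiom} to the previously established Lemma~\ref{lem:disint-indep}, which already encapsulates the single essential use of atomicity: namely, that an output wire may be copied with one copy fed back, provided the relevant equation holds when the wire is instead drawn as two \emph{independent} copies. My strategy is therefore to massage both sides of the claimed identity into the template of $f_1,f_2 : X \to W$ and $g_1,g_2 : W \otimes X \otimes W \to Y$ appearing in that lemma, where the wire being contracted plays the role of the copied object $W$.

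First I would use conditionals to disintegrate the non-signalling morphism so that the wire being contracted is produced by a single morphism $p$ out of the context, and only then copied; the naturality of copy together with the deletion axiom let me rewrite comb insertion so that $\Delta \circ p$ appears explicitly, with one copy entering the hole-filler and the other retained. With this normal form in hand, the two sides of \eqref{eq:comb-axiom} are seen to differ precisely by replacing $\Delta \circ p$ with $p \otimes p$, i.e.\ by replacing the single copied draw with two independent draws. Verifying the hypothesis of Lemma~\ref{lem:disint-indep} --- the equality of the two wirings once the shared wire is drawn twice independently --- should then be routine: with independent copies there is no genuine feedback, the non-signalling assumption lets the spurious second output be marginalized by $\del$, and the equation collapses by naturality of copy and delete. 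Lemma~\ref{lem:disint-indep} then transfers this independent-copy equality across the absolute continuity $\Delta \circ p \ll p \otimes p$ to the copied-draw equality, which is exactly the statement we want.

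I expect the main obstacle to be the diagrammatic bookkeeping in the disintegration step: I must ensure the contracted wire is genuinely produced \emph{once} before being copied, so that the contraction becomes a bona fide instance of the copy-contraction handled by Lemma~\ref{lem:disint-indep}, rather than concealing a second, uncontrolled occurrence of the same randomness. Getting the factorization right --- and checking that the non-signalling hypothesis is invoked exactly where it is needed to discharge the lemma's premise --- is where the real work lies; once the diagram matches the template, the conclusion is immediate.
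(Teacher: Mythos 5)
Your proposal matches the paper's own proof: the paper likewise uses conditionals to disintegrate each non-signalling morphism $f_i$, rewrites the assumed equality into the independent-copies hypothesis of Lemma~\ref{lem:disint-indep} (which is where atomicity, i.e.\ $\Delta \circ p \ll p \otimes p$, is actually used), and then applies that lemma to obtain the contracted equality. The diagrammatic bookkeeping you flag as the main obstacle is exactly what the paper's three displayed disintegration steps carry out, so your reduction is the same argument.
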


\noindent Note that this is precisely the statement of Lemma 4.2.5 of \cite{HoughtonLarsen2021} in our setting.

\section{Causal Traces}\label{sec:causaltraces}

\begin{definition}
We say a morphism $f : X \otimes W' \to Y \otimes W$ is \emph{non-signalling} (from $W'$ to $W$) if there exists a morphism $f_s : X \to W$ such that 
\[ \scalebox{0.8}{\tikzfig{f-nonsig-2}} \]
This captures the intuition that in order to determine $W$, we don't need access to $W'$ (but we do to compute the joint output $Y$).
\end{definition}

\noindent For non-signalling morphisms, we can hope to define a canonical trace as follows.

\begin{proposition}
Let $\C$ be a Markov category with conditionals. Then $f : X \otimes W' \to Y \otimes W$ is non-signalling if and only if it can be written in the following form
\begin{equation} \scalebox{0.8}{\tikzfig{f-nonsig-nf}} \label{def:disint} \end{equation}
In this article, we will call the form \eqref{def:disint} a \emph{disintegration} of $f$. Note that while $f_s$ is unique (assuming cancellability), $f_p$ is not.
\end{proposition}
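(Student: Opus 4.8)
The plan is to prove the two implications separately and then address (non-)uniqueness. For the easy direction, suppose $f$ is given in the disintegration form \eqref{def:disint}. I would discard the output $Y$: by the comonoid counit laws the entire $f_p$-branch collapses to a discard, and what remains is $f_s$ applied to $X$ with the $W'$-input discarded. Reading off this marginal gives exactly the non-signalling witness $f_s : X \to W$, so $f$ is non-signalling.

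For the substantive direction, suppose $f$ is non-signalling with witness $f_s$. The idea is to use conditionals to split the joint output $(Y,W)$ of $f$ along $W$. Concretely, since $\C$ has conditionals, I would take a conditional $f_p : X \otimes W' \otimes W \to Y$ of the $Y$-output given the $W$-output (relative to the joint input $X \otimes W'$). By definition of the conditional, $f$ factors as: copy the inputs $X \otimes W'$; on one branch compute the $W$-marginal $\del_Y \circ f : X \otimes W' \to W$ and copy its output; then feed the remaining copy of the inputs together with a copy of $W$ into $f_p$ to produce $Y$. The crucial observation is that the marginal appearing here is precisely $\del_Y \circ f$, which by the non-signalling assumption equals $f_s$ with the $W'$-input discarded. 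Substituting this equality into the factorization turns the $W$-producing branch into $f_s$ acting on $X$ alone, discarding $W'$ from that branch, which is exactly the shape of \eqref{def:disint}.

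For uniqueness of $f_s$, I would note that any witness must reproduce the $W$-marginal, so both $f_s \circ (\id_X \otimes \del_{W'})$ and $\del_Y \circ f$ agree. Hence any two witnesses $f_s, f_s'$ satisfy $\del_{W'} \otimes f_s = \del_{W'} \otimes f_s'$ (up to symmetry), and cancellability of $W'$ forces $f_s = f_s'$. For non-uniqueness of $f_p$, I would recall that conditionals are only determined $(\del_Y \circ f)$-almost surely: on any part of $W$ carrying no mass under the marginal, $f_p$ may be altered freely, and a one-line example in $\finstoch$ (a $W$-value of probability zero) exhibits distinct $f_p$ yielding the same $f$.

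The main obstacle is the backward direction --- specifically, verifying that the marginal produced by the conditional construction coincides exactly with $\del_Y \circ f$, so that the non-signalling hypothesis can be applied, and then carrying out the substitution cleanly at the level of string diagrams. Everything else reduces to the counit and comonoid laws together with the cancellability axiom.
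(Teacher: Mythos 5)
Your proof is correct and is essentially the intended argument: the paper states this proposition without an explicit proof (treating it as routine given conditionals), and your construction --- conditioning $f$ on its $W$-output and then substituting the non-signalling equality $(\del_Y \otimes \id_W) \circ f = f_s \circ (\id_X \otimes \del_{W'})$ into the conditional factorization --- is precisely the standard way to obtain the disintegration \eqref{def:disint}, matching how disintegrations are used later (e.g.\ in the proof of Lemma~\ref{lem:atomic-contr}). Your handling of the side remarks is also right: uniqueness of $f_s$ follows from cancellability of $W'$ exactly as you say, and non-uniqueness of $f_p$ comes from conditionals being determined only almost surely with respect to the $W$-marginal.
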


\begin{definition}
Let $\C$ be an atomic Markov category with conditionals, and $f : X \otimes W \to Y \otimes W$ be non-signalling. Then we define the causal trace $\trace W(f) : X \to Y$ in terms of any disintegration as
\begin{equation} \scalebox{0.8}{\tikzfig{ftrace}} \label{def:ftrace} \end{equation}
By Lemma~\ref{lem:disint-indep}, this definition does not depend on the choice of disintegration. 
\end{definition}

\noindent The definition of the causal trace is unique in that it is forced via the trace axiom. For a non-signalling morphism $f$, \emph{any} trace must send $f$ to \eqref{def:ftrace}. The special role of non-signalling is also reflected in the following proposition.

\begin{proposition}\label{prop:trace-discardable}
Let $\C$ be a traced semicartesian monoidal category. If $f : X \otimes W \to Y \otimes W$ is a discardable morphism, then $\trace W (f) : X \to Y$ need not be discardable, unless $f$ is non-signalling.
\end{proposition}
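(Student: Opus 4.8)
The statement of Proposition~\ref{prop:trace-discardable} is unusual: it does not assert an equation, but rather a \emph{non-implication}, namely that discardability of $f$ fails to guarantee discardability of $\trace W(f)$ in general, while the non-signalling condition restores it. Accordingly, my plan has two distinct halves: a positive direction (non-signalling implies the trace is discardable) and a negative direction (a counterexample showing discardability of $f$ alone is insufficient).

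For the positive direction I would argue as follows. Recall that in a traced semicartesian category, discardability means $\del_Y \otimes \del_W \circ f = \del_{X \otimes W}$. The key is the interaction between the trace and the discarding maps, governed by the trace axioms in Appendix~\ref{app:traces} --- specifically naturality (dinaturality) in the outputs and the superposing/yanking axioms. I would post-compose $\trace W(f)$ with $\del_Y$ and use naturality of the trace in the $Y$-output to pull $\del_Y$ inside the trace, reducing the problem to computing $\trace W(\del_Y \otimes \id_W \circ f)$. When $f$ is non-signalling, the $W$-output is computed by some $f_s : X \to W$ independently of the looped $W$-input, so after discarding $Y$ the remaining morphism $X \otimes W \to W$ factors through $f_s \otimes \del_W$; feeding this into the trace and applying the yanking/vanishing axioms collapses the loop to $\del_X$. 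The main subtlety here is book-keeping with the trace axioms to justify each rewrite, but no deep new idea is needed beyond the non-signalling normal form.

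For the negative direction --- which I expect to be the crux --- I would exhibit a concrete traced semicartesian category together with a discardable but \emph{signalling} morphism $f$ whose trace is not discardable. The natural candidate is a category of relations or partial maps, e.g.\ $\catname{TotRel}$ or a suitable category of finite sets and relations equipped with its canonical (partial) trace, where discarding is the total projection and the trace realises genuine feedback. I would choose $f : X \otimes W \to Y \otimes W$ so that it is total (hence discardable) but routes information from the looped $W$-input back into the $W$-output, so that closing the loop imposes a fixpoint constraint that can fail to be satisfiable for some inputs, making $\trace W(f)$ partial and thus non-discardable. The delicate point is ensuring the chosen category is genuinely semicartesian and traced in a way compatible with the paper's conventions, and that $f$ is discardable yet the induced feedback produces a non-total $\trace W(f)$; verifying totality of $f$ against non-totality of its trace is where the argument has real content.

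I would conclude by contrasting the two halves: the failure in the counterexample is precisely attributable to signalling, and the positive argument shows this is the only obstruction, which is the content of the proposition. The hardest step is identifying a clean, genuinely traced semicartesian example for the negative direction, since many familiar Markov-like categories either lack a total trace or make every discardable morphism's trace automatically discardable; pinning down a minimal witness is the real obstacle.
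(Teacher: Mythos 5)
Your positive half (non-signalling implies the trace is discardable) is essentially the paper's argument: tightening pulls $\del_Y$ inside the loop, the non-signalling factorization replaces $(\del_Y \otimes \id_W) \circ f$ by $f_s \circ (\id_X \otimes \del_W)$, and then sliding $f_s$ around the loop followed by yanking collapses the diagram to $\del_X$. (The paper presents exactly this as a single string-diagram computation; note you need the sliding axiom here, not just ``yanking/vanishing'', plus discardability of $f_s$, but these are routine.)

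The genuine gap is the negative half, which you rightly call the crux but do not complete --- and, as you have framed it, cannot complete. You insist on finding a witness in a ``genuinely semicartesian'' traced category; but semicartesian means the unit is terminal, so \emph{every} morphism there, including every trace, is automatically discardable, and no counterexample can exist in such a category. The proposition has to be read as a statement about a traced monoidal category with a chosen discarding structure in which discardability is a nontrivial property of some morphisms --- i.e.\ a non-semicartesian ambient category whose discardable morphisms form the Markov-like part. That is exactly what the paper does, and its witness is a one-liner: in the compact closed category $\mat(\R^+)$, the identity $\id_W$ is discardable (and signalling), yet $\trace W(\id_W) = |W| : I \to I$, which is not the normalized scalar $\id_I$ whenever $|W| \neq 1$. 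Your concrete suggestion $\trel$ fails for a second reason: total relations are not closed under the canonical trace of $\catname{Rel}$ --- that non-closure is precisely the phenomenon the proposition describes --- so $\trel$ carries no trace to begin with. Your fixpoint idea does work if you drop both constraints and use $\catname{Rel}$ itself: negation on $W = \{0,1\}$ (with $X = Y = I$) is a total, hence discardable, relation with no fixed point, so its trace is the empty relation $I \to I$, which is not discardable. So the idea is salvageable, but as written the proposal neither exhibits the witness nor could it under the self-imposed semicartesianness requirement; you explicitly leave ``pinning down a minimal witness'' open, and that is exactly the content the paper's proof supplies.
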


\noindent We spell out concretely how to compute the trace in $\finstoch$:

\begin{example}
Let $f : X \otimes W \to Y \otimes W$ be a non-signalling morphism in $\finstoch$, and assume $W \neq \emptyset$. As we can compute the causal trace using any embedding into a traced category, such as the usual embedding into $\mat(\R)$. There we have
\[ \trace W (f)(y|x) = \sum_{w} f(y,w|x,w) \]
This will not define a normalized probability kernel for a general $f$. However, if $f$ is non-signalling then the sum $\sum_{y} f(y,w|x,w')$ does not depend on $w'$. Hence by fixing any $w_0 \in W$, we can prove normalization
\begin{align*}
\sum_{y} \trace W f(y|x) = \sum_y \sum_w f(y,w|x,w) 
= \sum_w \sum_y f(y,w|x,w) 
= \sum_w \sum_y f(y,w|x,w_0) 
= 1
\end{align*}
from the normalization of $f$. 
\end{example}

\noindent In the language of \cite{Virgo2023}, we can see a non-signalling morphism as a degenerate kind of automaton whose future states do not depend on the current states. The causal trace takes a fixed point by reusing identical copies of states. 

\subsection{Laws of the Causal Trace}\label{sec:causaltraces-laws}

We now state and prove a list of properties which the causal trace enjoys. We will compare it with the notion of \emph{partial trace} discussed by \cite{Haghverdi2005,Bagnol2015} and \emph{traced ideal} introduced in \cite{Abramsky1999}. This requires us to postulate a class of traceable morphisms, which we choose as the non-signalling ones, $\mathbb T_W^{X,Y} = \{ f : X \otimes W \to Y \otimes W \mid \text{$f$ is non-signalling}\}$. 
All proofs are given in the Appendix (Section~\ref{app:traceproofs}).

\begin{proposition}[Tightening -- naturality in $X,Y$]\label{prop:tightening}
If $f : X \otimes W \to Y \otimes W$ is traceable, then $(h \otimes \id_W) \circ f \circ (g \otimes \id_W) : X' \otimes W \to Y' \otimes W$ is traceable and 
	\begin{equation}
		\trace W((h \otimes \id_W)\circ f \circ (g \otimes \id_W)) = h \circ \trace W(f) \circ g
	\end{equation}
\end{proposition}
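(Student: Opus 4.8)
The plan is to prove the statement in two stages: first that the composite is again non-signalling (so that its causal trace is even defined), and then that the trace lets $g$ and $h$ slide out to its two sides. Throughout I would reason on the disintegration normal form \eqref{def:disint} and the definition \eqref{def:ftrace} of the causal trace.

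\textbf{Step 1 (traceability).} Writing $f' = (h \otimes \id_W) \circ f \circ (g \otimes \id_W)$, I would first exhibit $f_s \circ g$ as a signalling map for $f'$. Computing the $W$-marginal of $f'$ means discarding the $Y'$-output; since $h$ is a morphism of a Markov category we have $\del_{Y'} \circ h = \del_Y$, so the $Y'$-wire discards through $h$ and the marginal reduces to $(\del_Y \otimes \id_W) \circ f \circ (g \otimes \id_W)$. Non-signalling of $f$ rewrites $(\del_Y \otimes \id_W) \circ f$ as $f_s$ applied to the $X$-wire after discarding the input $W$; prepending $g \otimes \id_W$ and using naturality of $\del_W$ then yields precisely $f_s \circ g$ applied to the $X'$-wire after discarding the input $W$. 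Hence $f'$ is non-signalling with $(f')_s = f_s \circ g$, so it is traceable.

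\textbf{Step 2 (the trace computation).} Given a disintegration of $f$ with data $(f_s, f_p)$, I would build a disintegration of $f'$ by prepending $g$ to the $X$-wire (before its copy node) and postcomposing $h$ onto the $Y$-output. Because $g$ feeds only the $X$-input, which is copied and distributed into $f_s$ and $f_p$, and $h$ only postprocesses $Y$, the result is again in the normal form \eqref{def:disint}, with signalling map $f_s \circ g$ as in Step~1 and with the $W$-feedback wiring untouched. Substituting this disintegration into \eqref{def:ftrace}, the loop connecting the $W$-output of $f_s$ back into the $W$-input of $f_p$ is formed exactly as it is for $f$, and $g$ (to the left of the $X$-wire) and $h$ (to the right of the $Y$-wire) are disjoint from this loop; they therefore factor out, giving $h \circ \trace W(f) \circ g$. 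Well-definedness of both sides is supplied by Lemma~\ref{lem:atomic-contr}, which makes \eqref{def:ftrace} independent of the chosen disintegration.

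\textbf{Main obstacle.} The delicate point is Step~2: confirming that prepending $g$ genuinely yields a disintegration in the normal form \eqref{def:disint}. Since $g$ need not be deterministic, I must ensure it is applied \emph{once} and then copied, i.e. that it sits upstream of the single copy node of the $X$-wire rather than being duplicated across the branches feeding $f_s$ and $f_p$. This is exactly what precomposition by $g \otimes \id_W$ guarantees, and it is what makes the outcome $\trace W(f) \circ g$ (a single draw from $g$ feeding the trace) rather than a morphism applying $g$ independently in several places. Once this is pinned down, the remainder is routine naturality of discarding together with sliding morphisms along wires disjoint from the traced object $W$.
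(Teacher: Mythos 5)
Your Step 1 is correct and matches the paper's (brief) treatment of traceability: discardability of $h$ plus non-signalling of $f$ gives $(f')_s = f_s \circ g$.

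Step 2, however, has a genuine gap, and it is located exactly at the point you flag as the ``main obstacle'' --- but your resolution of that obstacle goes the wrong way. The object you build (apply $g$ to $X'$, \emph{then} copy, then feed the two copies into $f_s$ and $f_p$) is a correct string-diagram representation of $f'$, but it is \emph{not} a disintegration in the sense of \eqref{def:disint}: the normal form requires the copy node to sit directly on the raw input wire $X'$, with $(f')_s$ and $(f')_p$ each receiving a copy of $x'$ itself. With a non-deterministic $g$ upstream of the copy node you cannot push $g$ past the copy (that would duplicate its randomness and change the morphism), so your diagram is not of the shape that the definition \eqref{def:ftrace} of the causal trace, and its well-definedness via Lemma~\ref{lem:atomic-contr}, are allowed to consume. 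The causal trace is only defined by closing the loop on a \emph{genuine} disintegration; closing the loop on your generalized form and claiming the answer agrees is precisely an instance of the contraction identities one is in the middle of establishing, so it cannot be assumed.

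The missing idea --- and the crux of the paper's proof --- is to convert your form into a genuine disintegration using a Bayesian inverse, which exists because $\C$ has conditionals. Concretely, the paper takes $(f')_s = f_s \circ g$ and builds $(f')_p$ by reconstructing the intermediate sample $x$ from the pair $(x', w)$ via the Bayesian inverse $(f_s)^\dagger_g : X' \otimes W \to X$ of $f_s$ with respect to the prior $g$, and then applying $f_p$ and $h$. The defining equation of the Bayesian inverse says exactly that ``sample $x \sim g(x')$, then $w \sim f_s(x)$, keep both'' equals ``sample $w \sim (f_s \circ g)(x')$, then $x \sim (f_s)^\dagger_g(x', w)$, keep both,'' which is what lets one commute $g$ past the copy node legitimately. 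Plugging this honest disintegration into \eqref{def:ftrace} and using the same equation in reverse then yields $h \circ \trace W(f) \circ g$. Without this step your argument establishes the right intuition (the loop does not touch $g$ or $h$) but never produces an object to which the definition of the causal trace applies.
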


\begin{proposition}[Sliding -- dinaturality in $W$]\label{prop:sliding}
Let $f : X \otimes W' \to Y \otimes W$ and $g : W \to W'$. If $f$ is traceable, then so are $(\id_Y \otimes g) \circ f$ and $f \circ (\id_X \otimes g)$, and
\begin{equation}
\trace U((\id_Y \otimes g) \circ f) = \trace V(f \circ (\id_X \otimes g))
\end{equation}
\end{proposition}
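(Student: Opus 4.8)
The plan is to reduce both causal traces to a common normal form built from a single disintegration of $f$. First fix a disintegration \eqref{def:disint} of $f$, given by its output marginal $f_s : X \to W$ together with a plug-in part $f_p$. Both composites are non-signalling, hence traceable by the disintegration characterisation \eqref{def:disint}: pre-composing with $g$ on the input cannot alter the output marginal, so the output-$W$ marginal of $f \circ (\id_X \otimes g)$ is still $f_s$, while the output-$W'$ marginal of $(\id_Y \otimes g) \circ f$ is $g \circ f_s$. (Here I read the two traces in the statement as $\trace{W'}$ and $\trace W$, forced by the types.) By Lemma~\ref{lem:atomic-contr} the value of each causal trace is independent of the chosen disintegration, so I am free to pick convenient ones and to identify each trace with a canonical morphism $k : X \to Y$ that copies $f_s(x)$, routes one copy through $g$, and feeds both copies into $f_p$.

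For $f \circ (\id_X \otimes g)$ the computation is direct. A disintegration is obtained by leaving the marginal $f_s$ untouched and sliding $g$ into the input-$W$ slot of $f_p$. Feeding this into the trace formula \eqref{def:ftrace} copies $f_s$ into the two $W$-slots and applies $g$ to the input copy, yielding exactly $k$. The left-hand side $(\id_Y \otimes g) \circ f$ is where the real work lies. Post-composing with $g$ acts on the \emph{output} wire, so forming a disintegration with marginal $g \circ f_s$ requires a Bayesian inversion of $g$: its plug-in part is a conditional reconstructing $Y$ from the $W'$-output value. The key step is that when the trace \eqref{def:ftrace} feeds the traced wire back into both the input slot and the conditioning slot and integrates it against the marginal $g \circ f_s$, the posterior cancels against the marginal and reconstructs precisely the joint behaviour of $(w, g(w))$ with $w$ drawn from $f_s$. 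This cancellation is exactly the disintegration property, and I would carry it out diagrammatically using the contraction identity of Lemma~\ref{lem:atomic-contr}, with Lemma~\ref{lem:disint-indep} supplying the underlying merge of disintegrations. The resulting diagram feeds this joint into $f_p$ and therefore also coincides with $k$, completing the equality.

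The main obstacle is precisely this output-side inversion. In a traced or compact-closed category, sliding is one of the defining axioms and holds for free; but the causal trace is only defined on non-signalling morphisms via disintegration, so I must reconstruct intrinsically the fact that the Bayesian inversion of $g$ cancels against the marginal upon feedback. This is where atomicity and conditionals are indispensable, entering through Lemma~\ref{lem:atomic-contr}: without them the diagram $\trace{W'}((\id_Y \otimes g) \circ f)$ cannot be rewritten into the plugged-in form, which is exactly the phenomenon that fails in $\borelstoch$ by Example~\ref{example:borelstoch-no-contraction}.
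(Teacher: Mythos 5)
Your proposal is correct and takes essentially the same route as the paper: the paper's proof likewise fixes a disintegration $(f_s, f_p)$ of $f$, uses the Bayesian inverse $g^\dagger_{f_s}$ to produce a disintegration of $(\id_Y \otimes g) \circ f$ (while the disintegration of $f \circ (\id_X \otimes g)$ is obtained by sliding $g$ into $f_p$), and then computes both traces to the same morphism. One small attribution slip: the final "posterior cancels against the marginal" step is the defining equation of the Bayesian inverse (available from conditionals alone), not itself an instance of Lemma~\ref{lem:atomic-contr} — that lemma enters only through the well-definedness of the causal trace, i.e.\ the license to compute each trace from your chosen disintegration.
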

\begin{remark}
	The corresponding partial trace axiom stipulates that $(\id_Y \otimes g) \circ f$ is traceable if and only if $f \circ (\id_X \otimes g)$ is traceable, but this does not require that traceability of $f$ implies traceability of the composites. Our property is the same as the corresponding traced ideal axiom. 
\end{remark}

\begin{proposition}[Vanishing (Coherence with $I$)]
\label{prop:vanishing}
All $g : X \otimes I \to Y \otimes I$ are traceable, and $\trace I(g) = g$.
\end{proposition}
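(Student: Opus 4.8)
The plan is to verify the two claims in turn, exploiting the fact that $I$ is terminal in any Markov category. First I would argue that every $g : X \otimes I \to Y \otimes I$ is non-signalling from $I$ to $I$, and hence traceable. The non-signalling condition asks for a morphism $f_s : X \to I$ making the marginal of $g$ onto the $W = I$ output equal to $f_s$ composed with the discard of the $W' = I$ input. Since $I$ is terminal, there is exactly one morphism $X \otimes I \to I$, so both of these expressions necessarily coincide with it (taking $f_s = \del_X$). The condition is therefore vacuously satisfied, and every such $g$ is traceable.

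Next I would exhibit an explicit disintegration of $g$ in the sense of \eqref{def:disint}. Up to the unitor isomorphisms $X \otimes I \cong X$ and $Y \otimes I \cong Y$, the morphism $g$ corresponds to its $Y$-component $g' : X \to Y$, the $I$-output being trivial. I claim the pair $f_s = \del_X$ and $f_p = g'$ is a disintegration: copying $X$ and sending one copy through $\del_X$ produces the (trivial) $W = I$ output, while the remaining copy feeds $g'$ to produce $Y$. Terminality of $I$ again makes the $W$-branch degenerate, so this reproduces $g$.

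Finally I would substitute this disintegration into the definition \eqref{def:ftrace} of the causal trace. The trace feeds the $W$-output of $f_s$ back as the $W'$-input of $f_p$, but here that feedback wire carries only $I$, so it is trivial. Concretely the construction copies $X$, discards one copy through $f_s = \del_X$, and runs $g'$ on the other; the comonoid counit law $(\id_X \otimes \del_X) \circ \Delta_X = \id_X$ then collapses this to $g'$, i.e.\ to $g$ under the unitors. Independence of $\trace I(g)$ from the chosen disintegration is already guaranteed by Lemma~\ref{lem:atomic-contr}, so it suffices to compute with this one.

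I expect the only genuine obstacle to be notational: keeping track of the coherence (unitor) isomorphisms that silently identify $X \otimes I$ with $X$, and confirming that each graphical simplification is exactly an instance of the counit law or of terminality of $I$. In particular, no appeal to atomicity or conditionals beyond the already-established well-definedness of $\trace I$ should be required in the $W = I$ case.
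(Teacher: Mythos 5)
Your proposal is correct. The paper itself gives no written proof of Vanishing (it is the one trace axiom omitted from Appendix~\ref{app:traceproofs}, evidently regarded as immediate), and your argument is precisely the natural elaboration of that triviality: terminality of $I$ in a Markov category makes the non-signalling condition automatic with $f_s = \del_X$, the pair $(\del_X, g)$ (modulo unitors) is a disintegration, and substituting it into \eqref{def:ftrace} collapses to $g$ by the counit law, with well-definedness already covered by Lemma~\ref{lem:atomic-contr}.
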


\begin{proposition}[Associativity (Coherence with $\otimes$)] \label{prop:associativity}
If $f : X \otimes (U \otimes V) \to Y \otimes (U \otimes V)$ is traceable, then so are $f : (X \otimes U) \otimes V \to (Y \otimes U) \otimes V$ and $\trace V(f) : X \otimes U \to Y \otimes U$, and
	\begin{equation*}
		\trace {U \otimes V}(f) = \trace U(\trace V(f))
	\end{equation*}
\end{proposition}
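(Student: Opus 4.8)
The plan is to prove the two traceability claims together with the equation by reducing everything to a single disintegration of $f$ with respect to the full loop object $U \otimes V$, and then re-expressing both sides of the equation as literally the same diagram. First I would fix a disintegration $(f_s, f_p)$ of $f$ as in \eqref{def:disint}, with $f_s : X \to U \otimes V$ and $f_p$ the processing part that receives the input copy of $U \otimes V$ together with a copy of the $f_s$-output. By definition \eqref{def:ftrace}, $\trace{U \otimes V}(f)$ is then the morphism obtained by copying the output of $f_s$ and feeding both copies into the two $(U \otimes V)$-ports of $f_p$. Crucially, the causal trace does not depend on the chosen disintegration (Lemma~\ref{lem:atomic-contr}), so I am free to work with tailor-made disintegrations throughout.

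For the traceability claims I would argue as follows. The marginal of $f$ onto the $V$-output is a marginal of $f_s$, hence factors through $X$ and is in particular independent of both the $U$- and the $V$-input; this exhibits $f : (X \otimes U) \otimes V \to (Y \otimes U) \otimes V$ as non-signalling from $V$ to $V$. To compute $\trace V(f)$, I would build an explicit $V$-disintegration of $f$ out of $(f_s, f_p)$: applying a conditional to $f_s$ (available since $\C$ has conditionals) splits it as its $V$-marginal followed by a conditional $c : X \otimes V \to U$ generating the $U$-output, and absorbing $c$ into $f_p$ produces the required processing part $f_p^V : X \otimes U \otimes V \otimes V \to Y \otimes U$. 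Tracing out $V$ identifies the looped $V$-port with the generated $V$, and one reads off $\trace V(f)$ as: sample $(u,v)$ from $f_s$, emit $u$, and set the $Y$-output via $f_p$ with the kept $U$-input, the looped $v$ in place of the $V$-input, and $(u,v)$ as the copy. Its $U$-output marginal is again the $U$-marginal of $f_s$, independent of the $U$-input, so $\trace V(f)$ is non-signalling from $U$ to $U$, which settles the second traceability claim.

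Finally I would compute $\trace U(\trace V(f))$ by the same recipe, now splitting $f_s$ via a conditional as its $U$-marginal followed by a conditional generating $V$, and tracing out $U$. The result is the morphism that samples $(u,v)$ from $f_s$ and applies $f_p$ with both $U \otimes V$ ports fed by $(u,v)$ — which is exactly the diagram for $\trace{U \otimes V}(f)$. The reason the two nested peelings reassemble the same morphism is the chain rule for disintegration: factoring $f_s$ as ($V$-marginal, then $U\mid V$) or as ($U$-marginal, then $V\mid U$) both recover $f_s$ itself, while the feedback steps identify each looped port with the matching copy. I expect the main obstacle to be the diagrammatic bookkeeping in the two conditional-splitting steps — tracking which copy of $U \otimes V$ is the genuine output, which is the input, and which is the feedback wire — and then verifying that the two reassembled diagrams coincide on the nose; the appeal to Lemma~\ref{lem:atomic-contr} is precisely what licenses the use of these convenient disintegrations.
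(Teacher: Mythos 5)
Your proposal is correct and takes essentially the same route as the paper's own proof: fix a disintegration $(f_s,f_p)$ of $f$ with respect to $U \otimes V$, condition $f_s$ further to produce an explicit $V$-disintegration, read off $\trace V(f)$, then re-factor $f_s$ the other way ($U$-marginal first) to disintegrate and trace out $U$, recovering the diagram for $\trace{U\otimes V}(f)$, with Lemma~\ref{lem:atomic-contr} licensing the freedom in choosing disintegrations. The only difference is presentational: you spell out in prose the two factorization orders of $f_s$ that the paper's string-diagram figures carry out implicitly.
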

\begin{remark}
	This property is stronger than the corresponding partial trace axiom, but weaker than the traced ideal axiom.	
\end{remark}

\begin{proposition}[Superposition (Strength)]\label{prop:superposition}
	If $f : X \otimes W \to Y \otimes W$ is traceable, so is $g \otimes f : X' \otimes X \otimes W \to Y' \otimes Y \otimes W$, and
	\begin{equation}
		\trace W(g \otimes f) = g \otimes \trace W(f)
	\end{equation}
\end{proposition}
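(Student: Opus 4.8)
The plan is to reduce everything to a single well-chosen disintegration of $g \otimes f$ and then invoke the well-definedness of the causal trace supplied by Lemma~\ref{lem:atomic-contr}. The first step is to check that $g \otimes f$ is traceable, i.e.\ non-signalling from $W$ to $W$. Since the $W$-output of $g \otimes f$ is literally the $W$-output of $f$, and the $Y'$-wire produced by $g$ can be erased using the fact that every morphism in a Markov category is discardable ($\del_{Y'} \circ g = \del_{X'}$), marginalizing $Y' \otimes Y$ out of $g \otimes f$ reproduces the $W$-marginal of $f$ tensored with $\del_{X'}$. As $f$ is non-signalling with witness $f_s : X \to W$, this shows $g \otimes f$ is non-signalling with witness $(g \otimes f)_s = f_s \circ (\del_{X'} \otimes \id_X)$, so $\trace W(g \otimes f)$ is defined.

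Next I would build a disintegration of $g \otimes f$ out of a chosen disintegration $(f_s, f_p)$ of $f$ in the sense of \eqref{def:disint}. The guiding idea is that $g$ lives on a wire entirely disjoint from the feedback, so it only needs to be tensored onto the $f_p$-block: taking the witness above and $(g \otimes f)_p = g \otimes f_p$, with $g$ reading $X'$ and $f_p$ reading $X$ together with both the fed-back $W$ and the internal copy of $W$. A short string-diagram computation — distributing the copy map as $\Delta_{X' \otimes X} = (\id_{X'} \otimes \mathrm{swap} \otimes \id_X) \circ (\Delta_{X'} \otimes \Delta_X)$ and using $\del_{X'}$ to erase the spurious $X'$-copy feeding $(g \otimes f)_s$ — confirms that this is a genuine disintegration of $g \otimes f$.

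Finally I would substitute this disintegration into the defining formula \eqref{def:ftrace}. Because the feedback loop closes $W$ onto $W$ through $f_p$ alone and never touches the $g$-wire, the resulting diagram factors as a tensor of $g$ with the trace diagram of the disintegration $(f_s, f_p)$, and the latter is by definition $\trace W(f)$. This yields $\trace W(g \otimes f) = g \otimes \trace W(f)$. That the left-hand side is independent of the chosen disintegration is exactly Lemma~\ref{lem:atomic-contr}, which is what licenses computing with the convenient disintegration constructed above.

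The main obstacle I expect is not conceptual but bookkeeping: one has to lay out the wires so that the copy structure on $X' \otimes X$, the discarding of $X'$ inside the witness, and the placement of $g$ outside the $W$-loop all line up, and then verify the single string-diagram identity exhibiting $(g \otimes f)_s$ and $(g \otimes f)_p$ as a disintegration. Once that diagram is drawn, factoring $g$ out of the trace is immediate, so essentially all of the content lies in getting this wiring right and appealing to well-definedness.
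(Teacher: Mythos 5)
Your proposal is correct and follows essentially the same route as the paper: the paper's proof also constructs a disintegration of $g \otimes f$ by tensoring $g$ onto the $f_p$-block and discarding $X'$ in the non-signalling witness, then substitutes it into \eqref{def:ftrace} so that $g$ factors out of the feedback loop, with Lemma~\ref{lem:atomic-contr} licensing the use of this particular disintegration. The only difference is that the paper presents the two steps purely as string diagrams, whereas you spell out the wiring in prose.
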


\begin{proposition}[Yanking]\label{prop:yanking}
The symmetry is traceable, and
	\begin{equation}
		\trace W(\mathrm{swap}_{W,W}) = \id_W
	\end{equation}
\end{proposition}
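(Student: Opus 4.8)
The plan is to reduce everything to the explicit formula for the causal trace attached to a chosen disintegration, exploiting that, by Lemma~\ref{lem:atomic-contr}, the value $\trace W(\mathrm{swap}_{W,W})$ is independent of the disintegration used. Hence it suffices to exhibit one convenient disintegration of the symmetry and then simplify the resulting composite by the comonoid laws. Throughout I read $\mathrm{swap}_{W,W} : W \otimes W \to W \otimes W$ with the first tensor factor as the free input $X = W$ and the second as the traced feedback wire.

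First I would verify that $\mathrm{swap}_{W,W}$ is non-signalling, so that it is traceable, i.e.\ $\mathrm{swap}_{W,W} \in \mathbb T_W^{W,W}$. Reading the symmetry informally as $(x,w) \mapsto (w,x)$, the feedback output is exactly the input $x$ and is therefore independent of the feedback input $w$. Concretely, discarding the $Y$-output from $\mathrm{swap}_{W,W}$ leaves the morphism $W \otimes W \to W$ which discards the feedback input and returns the $X$-input, i.e.\ it factors as $f_s \circ (\id_W \otimes \del_W)$ with the witness $f_s = \id_W$. This is precisely the non-signalling condition.

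Next I would record the matching disintegration in the form~\eqref{def:disint}: take $f_s = \id_W : W \to W$, producing the feedback output as the $X$-input, together with $f_p = \del_W \otimes \id_{W'} : W \otimes W' \to W$, which discards the copied $X$-leg and keeps the feedback input, producing the $Y$-output. That this pair genuinely reassembles into $\mathrm{swap}_{W,W}$ is a routine graphical check using naturality of the symmetry against $\Delta_W$ and $\del_W$.

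Finally I would substitute this disintegration into the definition~\eqref{def:ftrace}. The causal trace routes the output of $f_s$ into the feedback input $W'$ of $f_p$; with $f_s = \id_W$ and $f_p$ the second projection, the composite copies the input via $\Delta_W$, sends one leg unchanged into $f_p$, and $f_p$ then discards the other leg and returns precisely the leg it received. By the counit (copy–delete) law this collapses to $\id_W$, yielding $\trace W(\mathrm{swap}_{W,W}) = \id_W$. I expect no genuine obstacle beyond keeping the typing of the two $W$-wires straight: the content is entirely the comonoid laws, and atomicity enters only through the well-definedness of the trace already secured by Lemma~\ref{lem:atomic-contr}.
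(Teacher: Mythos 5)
Your proof is correct and is essentially the paper's own argument: the paper likewise just exhibits a disintegration of the symmetry (with $f_s = \id_W$ and $f_p$ the projection that keeps only the feedback input, discarding the rest) and collapses the resulting composite of \eqref{def:ftrace} to $\id_W$ via the copy--delete laws. The only cosmetic discrepancy is typing: in the paper's disintegration format $f_p$ has three inputs ($W \otimes X \otimes W' \to Y$, as in Lemma~\ref{lem:disint-indep}), so your $f_p$ should discard both the copied $f_s$-output leg and the $X$-leg rather than one, which changes nothing substantive.
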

\begin{remark}
	Our property is the same as the corresponding partial trace axiom. This is stronger than the yanking axiom for traced ideals, as there the symmetry is not required to be traceable.
\end{remark}

\section{Causal Traces in Free Markov Categories}\label{secfreemarkovtrace}

In this section, we show that free Markov categories also have causal traces, and that interpretations of Markov string diagrams in cancellative, atomic Markov categories $\C$ with conditionals preserve causal traces. This is an analogue of Theorem 4.2.13 of \cite{HoughtonLarsen2021}, albeit in a different setting: Markov categories with extra structure instead of universal theories. In particular, we obtain that all contraction identities are satisfied. That is, in the language of \cite{HoughtonLarsen2021}, the causal trace of a non-signalling morphism in $\C$ can be computed via any stencil representation. 

We use free Markov categories as constructed in \cite{Fritz2023b}. Recall that every monoidal signature $\Sigma$ gives rise to a finite hypergraph which we also denote by $\Sigma$. A labelling of wires and boxes in a hypergraph $G$ is a hypergraph homomorphism to $\Sigma$. Finite hypergraphs and their homomorphisms form a category $\finhyp$. A \emph{Markov string diagram} is a cospan $\underline{m} \xrightarrow{i} G \xleftarrow{j} \underline{n}$ with discrete hypergraphs $\underline{m}, \underline{n}$, satisfying the conditions of acyclicity, left monogamy, and having no eliminable boxes. The setting is recalled in more detail in Appendix~\ref{app:freemarkov}.

\begin{proposition}[\cite{Fritz2023b}]
	The free Markov category $\freemarkov_\Sigma$ over a monoidal signature $\Sigma$ can be constructed as follows:
	\begin{enumerate}
		\item Objects are hypergraph homomorphisms $\underline{m} \to \Sigma$, i.e. lists of types in $\Sigma$.
		\item Morphisms are (isomorphism classes of) Markov string diagrams, which compose by pushout and subsequent normalisation (elimination of eliminable boxes).
		\item The tensor is given by coproduct.
	\end{enumerate}
\end{proposition}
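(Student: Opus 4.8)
The plan is to exhibit the described category --- call it $\mathcal D_\Sigma$, with objects lists of $\Sigma$-types and morphisms isomorphism classes of Markov string diagrams --- as satisfying the universal property of the free Markov category: for every Markov category $\C$ and every assignment of each generating box of $\Sigma$ to a morphism of $\C$ of matching type, there should be a unique Markov functor $\mathcal D_\Sigma \to \C$ extending the assignment. Since the statement is recalled from \cite{Fritz2023b}, I would follow the combinatorial methodology for presenting monoidal categories by cospans of hypergraphs, specialised from the hypergraph-category (Frobenius) case to the copy-discard setting in which only the comonoid half of the Frobenius structure is retained.

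First I would check that $\mathcal D_\Sigma$ is a category at all. Composition of cospans $\underline m \to G \leftarrow \underline n$ and $\underline n \to H \leftarrow \underline k$ is the pushout over the shared boundary $\underline n$ in $\finhyp$, followed by normalisation (repeated deletion of eliminable boxes). Associativity and the unit laws then reduce to the universal property of pushouts together with the fact that normalisation is compatible with pasting, so that each pushout has a well-defined normal form. Next I would equip $\mathcal D_\Sigma$ with its Markov structure by naming the cospans that represent copying $\Delta$, discarding $\del$, identities and symmetries, and verifying the comonoid axioms and naturality of $\del$ directly on hypergraphs, up to the isomorphism-plus-normalisation equivalence. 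Finally I would define the interpretation functor into an arbitrary Markov category $\C$: every Markov string diagram decomposes (non-uniquely) into generators, copies, discards, swaps and identities glued by $\circ$ and $\otimes$, and one interprets it by the corresponding composite in $\C$; functoriality, monoidality and uniqueness of this assignment are what characterise the free object.

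The main obstacle is to show that the three side conditions, together with the hypergraph quotient, capture the equational theory of Markov categories \emph{exactly}. Two observations make this tractable. First, the comonoid laws for copying --- coassociativity, cocommutativity, counitality --- are absorbed by the hypergraph representation itself: a copied wire is merely a node with several outgoing tentacles, a structure associative and commutative by construction, so these equations need no explicit rewriting. Second, the only residual equation is naturality of the discard map, and this is precisely what box-elimination normalises; the genuine combinatorial work is therefore to prove that box-elimination is terminating and confluent, yielding unique normal forms (the diagrams with ``no eliminable boxes''). Granting this, composition descends to isomorphism classes and the interpretation functor is well defined --- soundness, because any two decompositions of a normal-form diagram differ only by moves the Markov axioms validate --- while faithfulness, needed for freeness, follows by taking $\C = \mathcal D_\Sigma$. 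The conceptual reading of the conditions is that acyclicity forbids feedback (no trace or Frobenius unit is available), and left monogamy --- each wire having a unique producer --- encodes that wires may be freely copied and discarded but never merged, so that only the comonoid and not the monoid half of a Frobenius structure is present. Establishing this soundness-and-completeness rewriting argument is the crux; once it is in place the universal property is routine.
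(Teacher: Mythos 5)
This proposition is not proved in the paper at all: it is imported verbatim from \cite{Fritz2023b} (and restated in Appendix~\ref{app:freemarkov} with the copy/delete cospans added), so there is no internal proof to compare your attempt against. What can be assessed is whether your sketch would stand on its own and how it relates to the argument in the cited source.

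Your outline has the right skeleton, and your reading of the side conditions is accurate: copying is absorbed by letting a wire have several consumers, left monogamy excludes the monoid half of a Frobenius structure, acyclicity forbids feedback, and elimination of eliminable boxes implements exactly the naturality of $\del$ that separates Markov categories from gs-monoidal ones. But two things prevent this from being a proof. First, you explicitly defer the crux --- termination and confluence of box-elimination, compatibility of normalisation with pushout-pasting, and independence of the interpretation from the chosen decomposition of a diagram --- and that deferred part \emph{is} the theorem; everything you do carry out is the routine part. The route in \cite{Fritz2023b} avoids much of the difficulty you would face by not interleaving normalisation with composition: one first constructs the free gs-monoidal category $\cat{FreeGS}_\Sigma$, where composition is pushout alone and no normal forms are needed, proves its universal property there, and only then obtains $\freemarkov_\Sigma$ by showing that normalisation is an identity-on-objects gs-monoidal functor $\cat{FreeGS}_\Sigma \to \freemarkov_\Sigma$ (their Lemma 6.5, which the present paper itself invokes in Section~\ref{secfreemarkovtrace}). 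Second, your claim that ``faithfulness, needed for freeness, follows by taking $\C = \mathcal D_\Sigma$'' is circular: instantiating the universal property at $\mathcal D_\Sigma$ only yields the identity functor and says nothing about whether two syntactically distinct terms denoting the same diagram are provably equal; separating such terms requires a functor \emph{out of} $\mathcal D_\Sigma$ into a term model, which presupposes exactly the well-definedness you are trying to establish. So: correct conceptual reading, but the load-bearing combinatorial lemmas are missing and one of your claimed shortcuts does not work.
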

\noindent To ease notation we will write $f : n \to m$ for a morphism $f : (\underline{m} \to \Sigma) \to (\underline{n} \to \Sigma)$, thus leaving the labelling implicit. It is straightforward to verify the following:

\begin{proposition}
A morphism $f : n \otimes w \to m \otimes w$ in $\freemarkov_\Sigma$ is non-signalling if and only if there are no directed paths from input ports in $w$ to output ports in $w$.
\end{proposition}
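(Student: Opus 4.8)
The plan is to prove both directions of the equivalence by analyzing the combinatorial structure of Markov string diagrams as cospans $\underline{n\otimes w} \xrightarrow{i} G \xleftarrow{j} \underline{m \otimes w}$, using the explicit definition of non-signalling in $\freemarkov_\Sigma$ together with the characterization of disintegrations given earlier. The key observation is that in a free Markov category, morphisms are essentially directed acyclic hypergraphs with distinguished input and output ports, so questions about information flow become questions about the existence of directed paths. I would set up notation by letting $w_{\hgin}$ denote the set of input ports labelled by $w$ and $w_{\hgout}$ the output ports labelled by $w$, and similarly for the $n$ and $m$ ports.

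For the harder direction (no directed paths implies non-signalling), I would argue that the absence of directed paths from $w_{\hgin}$ to $w_{\hgout}$ lets us partition the boxes of $G$ into those that lie on some path reaching $w_{\hgout}$ and those that do not. First I would isolate the sub-diagram $f_s$ computing the $W$-output: since no input port in $w$ can reach any output port in $w$, the wires feeding $w_{\hgout}$ depend only on the $n$-inputs and on internal randomness, never on $w_{\hgin}$. Pulling out exactly the boxes and wires in the backward-cone of $w_{\hgout}$ yields a sub-cospan of type $n \to m' \otimes w$ (for some shared intermediate wires $m'$) which serves as $f_s$, and the remaining part, together with a copy-node distributing the shared randomness, reconstitutes $f$ in the disintegration form of \eqref{def:disint}. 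Verifying that this decomposition is a valid Markov string diagram — acyclic, left-monogamous, with no eliminable boxes after normalization — is the delicate bookkeeping, but it is forced by the acyclicity of $G$ and the monogamy condition already assumed on $f$.

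For the easier direction (non-signalling implies no directed paths), I would argue by contraposition: suppose there is a directed path $\pi$ from some input port in $w$ to some output port in $w$. I would show that this obstructs any disintegration. Given a disintegration \eqref{def:disint} with witness $f_s : X \to W$, the output wire in $w_{\hgout}$ is computed by $f_s$, which by construction has no access to $W' = w_{\hgin}$; but the path $\pi$ exhibits a genuine dependency of that output on $w_{\hgin}$ inside $G$. Making this rigorous amounts to observing that composing the two presentations and normalizing would have to identify the $w_{\hgout}$-port as reachable from $w_{\hgin}$ in the $f_s$-part, contradicting that $f_s$ does not take $w$ as an input. Equivalently, one can discard all outputs except $w_{\hgout}$ and all inputs except $w_{\hgin}$, reducing to the statement that a free-Markov morphism $W' \to W$ factoring through $\del_{W'}$ can contain no directed path from its input to its output, which is immediate from the path-connectivity semantics.

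The main obstacle I anticipate is the first direction: carefully extracting $f_s$ as a genuine sub-string-diagram and showing the residual diagram recombines into $f$ via a copy node, while respecting left monogamy (each wire has a unique producer) and the no-eliminable-boxes normal form. The backward-cone construction is conceptually clear, but one must check that no box is split across the cut and that copied wires are introduced only where the two sub-diagrams share intermediate outputs. I expect this is why the paper calls the verification ``straightforward'' rather than trivial: the combinatorics are routine given the hypergraph machinery of \cite{Fritz2023b}, but writing them out in full requires invoking the structural conditions on cospans from Appendix~\ref{app:freemarkov} at each step.
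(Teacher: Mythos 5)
Your converse direction (non-signalling $\Rightarrow$ no paths) is sound in outline, but your main direction has a genuine gap: you aim to reconstitute $f$ in the disintegration form \eqref{def:disint}, and that form need not exist in $\freemarkov_\Sigma$. The equivalence between non-signalling and \eqref{def:disint} is proved in the paper only for Markov categories \emph{with conditionals}, and free Markov categories do not have conditionals (the paper explicitly conjectures this). Concretely: take the signature with boxes $b : I \to V$, $g : V \to A$, $h : V \to B$, and let $f = (g \otimes h) \circ \Delta_V \circ b : I \to A \otimes B$, regarded as a morphism with $Y = A$, $W = B$ and trivial $X, W'$. It vacuously has no directed paths from $w$-inputs and is non-signalling with $f_s = h \circ b$, but no disintegration exists: \eqref{def:disint} would require some $f_p : B \to A$ with $(g \otimes h) \circ \Delta_V \circ b = (f_p \otimes \id_B) \circ \Delta_B \circ h \circ b$, and no choice of $f_p$ yields isomorphic hypergraphs, since on the left the box $g$ consumes the output wire of $b$ directly, while on the right every box other than $h$ lies strictly downstream of $h$. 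Your own parenthetical ``for some shared intermediate wires $m'$'' is the symptom of this problem: once $f_s$ carries extra outputs $m'$, the decomposition is no longer \eqref{def:disint}, and that shared randomness cannot in general be routed through $W$ alone in a free category, where boxes are formal and nothing can be ``recovered'' from their outputs.

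The fix is that non-signalling demands much less than a disintegration: only the marginal equation $(\del_m \otimes \id_w) \circ f = f_s \circ (\id_n \otimes \del_w)$. Your backward-cone construction proves exactly this, with no need to reconstitute $f$ at all. Composing $f$ with discards on the $m$-outputs and normalizing eliminates precisely the boxes admitting no directed path to a $w$-output port; what remains is the backward cone of the ports in $w_{\hgout}$ together with the $w$-input wires, which are now isolated, because a cone box consuming a $w$-input wire would create a forbidden path. That normalized cospan is literally $f_s \circ (\id_n \otimes \del_w)$ with $f_s$ the cone diagram (one checks it is acyclic, left monogamous, and normal, all inherited from $G$). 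For your converse direction, the one lemma to make explicit is that a box lying on a path from a port in $w_{\hgin}$ to a port in $w_{\hgout}$ never becomes eliminable during normalization of the marginal (induct backwards along the path: each such box has an output wire feeding the next box, or the retained output port), so the path survives into the normal form; since in $f_s \circ (\id_n \otimes \del_w)$ the $w$-input wires touch nothing, and cospan isomorphisms preserve connectivity and port assignments, the two cannot be equal. Also note that ``discarding all inputs except $w_{\hgin}$'' is not an operation on string diagrams; that step should be phrased as the marginalization argument just described.
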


\noindent We now show that $\freemarkov_\Sigma$ has causal traces, by 
describing the appropriate combinatorial contraction as a contraction 
of hypergraphs: Let $f : m \otimes w \to n \otimes w$ be 
non-signalling, represented by the cospan $\underline{m} + 
\underline{w} \xrightarrow{[i,i']} G \xleftarrow{[j,j']} 
\underline{n}+\underline{w}$. Graphically, forming the contraction 
amounts to gluing the wires connected to matching input and output 
ports in $w$, thus making them inner wires (not connected to any 
input or output port), and normalizing. See Figure 
\ref{figcontrexample} for an illustration. The need for normalisation 
is apparent from the example on Figure \ref{fig:need-to-normalise}.

\begin{figure}[ht]
	\begin{minipage}{0.4\textwidth}
		\[ \tikzfig{contr-normalisation}\]
		
		\caption{An example where normalisation is needed to compute the trace}
		\label{fig:need-to-normalise}
	\end{minipage}
	\begin{minipage}{0.55\textwidth}
		\[\begin{tikzcd}
			&& {\contr{w}(G)} & G & {\underline{n}} \\
			& G && {\underline{w}} \\
			{\underline{m}} && {\underline{w}+\underline{w}}
			\arrow["{[\id,\id]}", from=3-3, to=2-4]
			\arrow["{[i',j']}"', from=3-3, to=2-2]
			\arrow[dashed, from=2-2, to=1-3]
			\arrow[dashed, from=2-4, to=1-3]
			\arrow["\lrcorner"{anchor=center, pos=0.125, rotate=-45}, draw=none, from=1-3, to=3-3]
			\arrow["{i}", from=3-1, to=2-2]
			\arrow["{j}"', from=1-5, to=1-4]
			\arrow[dashed, from=1-4, to=1-3]
		\end{tikzcd}\]
		\caption{Construction of syntactic traces}
		\label{fig:contr_as_pushout}
	\end{minipage}
\end{figure}

\noindent More formally, we define the contracted string diagram $\contr{w}(f) : m \to n$ as the normalization of the resulting cospan in the diagram of Figure \ref{fig:contr_as_pushout}, where the central square is a pushout.

\noindent We verify that the resulting string diagram is acyclic and left monogamous. Acyclicity follows from the non-signalling assumption. For left monogamy, observe that after gluing the only affected wires are the one connected to ports in $w$. Every wire connected to an output port is either also connected to an input port, or is an output of a box. In both cases, left monogamy is preserved, for if a wire was both connected to an input and an output port, the input port cannot be in $w$ by the acyclicity assumption.

\begin{figure}[h]
	\begin{minipage}{0.4\textwidth}
		\[ \scalebox{0.8}{\tikzfig{context-trace}} \]
	\end{minipage}
	\begin{minipage}{0.25\textwidth}
		\[ \scalebox{0.8}{\tikzfig{hypergraph-example-1}} \]
	\end{minipage}
	$\rightsquigarrow$
	\begin{minipage}{0.25\textwidth}
		\[ \scalebox{0.8}{\tikzfig{hypergraph-example-2}} \]
	\end{minipage} 
	\caption{An example of computing the contraction in the hypergraph representation. (Left) The causal trace we want to compute (Right) Hypergraph representations. Black dots represent wires, white dots ports; the red ports are being contracted.} \label{figcontrexample}
\end{figure}

\begin{proposition}
The contraction operation $\contr{}$ satisfies the causal trace axioms (as stated in Propositions \ref{prop:tightening} - \ref{prop:yanking}) for $\freemarkov_\Sigma$.
\end{proposition}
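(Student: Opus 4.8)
The plan is to verify each of the six axioms (Propositions~\ref{prop:tightening}--\ref{prop:yanking}) directly at the level of hypergraph cospans, since equality of morphisms in $\freemarkov_\Sigma$ means isomorphism of normalised cospans. The guiding principle is that the contraction $\contr{W}$ is itself defined by a pushout (the gluing of matching $W$-ports) followed by normalisation, whereas composition is computed by a pushout and the tensor by a coproduct. Hence each trace axiom reduces to a compatibility statement between these colimit constructions, which I establish by exhibiting an explicit isomorphism of the resulting (un-normalised) hypergraphs and then invoking the uniqueness of normal forms from \cite{Fritz2023b}.

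The base cases are direct. Vanishing (\ref{prop:vanishing}) is immediate, as tracing over $I$ glues no ports. Yanking (\ref{prop:yanking}) follows by noting that the swap hypergraph consists of two crossing wires; gluing the traced input port to the traced output port merges them into a single wire, which is exactly the identity cospan. The next two cases are those in which the contracted ports are disjoint from the adjoined data. For Superposition (\ref{prop:superposition}) the tensor $g \otimes f$ is the coproduct of the underlying hypergraphs, and since the gluing touches only the $W$-ports lying in the $f$-summand, the contraction pushout commutes with the coproduct, giving $g \otimes \contr{W}(f)$. For Tightening (\ref{prop:tightening}) the pre- and post-composition with $g$ and $h$ attach boxes only at the $X$- and $Y$-ports, which are untouched by the $W$-gluing; the pasting lemma for pushouts then lets us perform the composition and contraction pushouts in either order.

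Associativity (\ref{prop:associativity}) asserts that gluing the $U$-ports and then the $V$-ports yields the same hypergraph as gluing the $(U \otimes V)$-ports at once; this is again pushout pasting, the only subtlety being that normalisation must be deferred until all gluings are done, which is licensed by confluence of the elimination of eliminable boxes. Sliding (\ref{prop:sliding}) is the most delicate. Here the box $g : W \to W'$ lies on the very wire that becomes internal after contraction, and I must check that placing $g$ before $f$ (in $f \circ (\id_X \otimes g)$) and after $f$ (in $(\id_Y \otimes g) \circ f$) give isomorphic contracted hypergraphs. After gluing, the wire carrying $g$ becomes a single internal path in both cases, and the two constructions differ only in where along this path the incidence with $g$ is recorded, so the cospans are isomorphic. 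The starred weakening is relevant precisely here: the identity only equates the two traces, and one must separately confirm that the non-signalling (acyclicity) condition is preserved by each of the two compositions so that both traces are in fact defined.

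The main obstacle I anticipate is the bookkeeping around normalisation. Every axiom is an equality up to elimination of eliminable boxes, and the two sides may create such boxes in different configurations; the clean way to handle this uniformly is to establish each isomorphism at the level of the raw gluing pushouts and only afterwards appeal to the functoriality and uniqueness of the normal form. Closely related is the need to re-confirm that acyclicity and left monogamy --- already verified for a single contraction in the discussion preceding this proposition --- continue to hold for the composite operations arising in Sliding and Associativity, so that the contraction is well-defined at every intermediate stage.
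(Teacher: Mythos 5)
Your proposal is correct and takes essentially the same route as the paper: dispatch the easy axioms directly, and handle tightening, sliding, and associativity by performing all gluings as raw pushouts with normalisation deferred to the end, justified by the uniqueness/functoriality of normalisation — which is exactly the tool the paper invokes, namely that normalisation is an identity-on-objects gs-monoidal functor $\cat{FreeGS}_\Sigma \to \freemarkov_\Sigma$ (Lemma~6.5 of \cite{Fritz2023b}). The paper's proof is a three-sentence sketch where you spell out the per-axiom colimit bookkeeping, but the underlying argument is the same.
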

\begin{proof}
	Vanishing, strength, and yanking are immediate. The other axioms require bit more caution around the normalization step that occurs in sequential composition and contraction. We can use the fact that normalization of string diagrams is an identity-on-objects gs-monoidal functor $\cat{FreeGS}_\Sigma \to \freemarkov_\Sigma$ from the free gs-monoidal category $\cat{FreeGS}$ (\cite{Fritz2023b}, Lemma 6.5) to solve these cases.
\end{proof}

\noindent We finally are ready to formally define our notion of contraction identities.

\begin{definition}\label{def:contractionid}
	A Markov category $\C$ is said to satisfy all contraction identities if for all $C_1,C_2 : m \otimes w \to n \otimes w$ non-signalling in a free Markov category over any signature $\Sigma$, and all interpreting functors $\scottbr{-} : \freemarkov_\Sigma \to \C$, we have that if $\scottbr{C_1} = \scottbr{C_2}$, then $\scottbr{\contr{w}(C_1)} = \scottbr{\contr{w}(C_2)}$.
\end{definition}

This definition is analogous to the `notions of contraction' in \cite{HoughtonLarsen2021}. We can show by induction that if $\C$ already has causal traces, then any Markov functor from $\freemarkov_\Sigma$ must preserve them: 

\begin{proposition}\label{proptracesoundness}
Let $\C$ have causal traces, and let $f : m \otimes w \to n \otimes 
w$ in $\freemarkov_\Sigma$, \\ $\scottbr{-} : \freemarkov_\Sigma \to 
\C$ an interpreting functor. Then, if $f$ is non-signalling, so is 
$\scottbr{f}$, and \\ $\scottbr{\contr{w}(f)} = 
\trace{\scottbr{w}}(\scottbr{f})$.
\end{proposition}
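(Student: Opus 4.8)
The plan is to prove the statement by structural induction on the morphism $f : m \otimes w \to n \otimes w$ in $\freemarkov_\Sigma$, using the inductive presentation of free Markov categories as string diagrams built from generators, composition, tensor, copy, discard, and swaps. The key conceptual point is that the contraction $\contr{w}$ in $\freemarkov_\Sigma$ is itself a causal trace (by the preceding proposition), so what we are really proving is that an interpreting Markov functor $\scottbr{-}$ preserves causal traces. Since both $\contr{}$ and $\trace{}$ are characterized axiomatically by the trace laws (Propositions~\ref{prop:tightening}--\ref{prop:yanking}), and a Markov functor preserves composition, tensor, copy, and discard on the nose, the inductive step should reduce to checking that each structural building block of $f$ interacts with the trace in a way that the functor respects. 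Before the induction, I would first establish the easy half: if $f$ is non-signalling in $\freemarkov_\Sigma$ (no directed path from input ports in $w$ to output ports in $w$), then $\scottbr{f}$ is non-signalling in $\C$, because the witnessing morphism $f_s$ in the disintegration \eqref{def:disint} is itself a string diagram whose interpretation provides the required disintegration of $\scottbr{f}$; functoriality does the rest.

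The main work is the inductive argument showing $\scottbr{\contr{w}(f)} = \trace{\scottbr{w}}(\scottbr{f})$. The cleanest route is to decompose $\contr{w}$ via the combinatorial trace laws: rather than inducting on the raw diagram, I would induct on how $f$ is assembled, and at each step invoke the matching trace law on both sides. For instance, if $f = (h \otimes \id_w) \circ g \circ (k \otimes \id_w)$, I apply Tightening (Proposition~\ref{prop:tightening}) in $\C$ and its combinatorial counterpart in $\freemarkov_\Sigma$, then use functoriality and the induction hypothesis on $g$. Similarly, a traced variable $w = u \otimes v$ is handled by Associativity (Proposition~\ref{prop:associativity}), a tensor $g \otimes f$ by Superposition (Proposition~\ref{prop:superposition}), sliding of wires along the feedback by Sliding (Proposition~\ref{prop:sliding}), and the elementary closing of a single loop by Yanking (Proposition~\ref{prop:yanking}) together with Vanishing (Proposition~\ref{prop:vanishing}) for the trivial unit loop. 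The crucial reduction is that any contraction can be exhibited, up to the structural laws, as a composite of these primitive moves applied to $f$, so that matching the $\freemarkov_\Sigma$-side contraction law to the $\C$-side trace law at every move yields the identity after interpretation.

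The main obstacle, and the step I would be most careful about, is the normalization step inherent in the definition of $\contr{w}$ and in sequential composition in $\freemarkov_\Sigma$. Gluing the contracted wires can create eliminable boxes, and the contracted cospan must be normalized before it represents a genuine morphism; I must check that $\scottbr{-}$ is insensitive to this, i.e. that the interpretation agrees on a diagram and its normal form. This is exactly the subtlety flagged in the preceding proposition, and the tool is that normalization is an identity-on-objects gs-monoidal functor $\cat{FreeGS}_\Sigma \to \freemarkov_\Sigma$, so any interpreting functor factors through it and cannot distinguish a diagram from its normalization. The second delicate point is ensuring that the non-signalling hypothesis is preserved at every stage of the induction, since the decomposition moves may expose intermediate subdiagrams on which the induction hypothesis must apply; acyclicity guarantees the feedback wire never becomes a genuine loop, so each intermediate trace is well-defined. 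Once these bookkeeping issues around normalization and non-signalling are discharged, the equality $\scottbr{\contr{w}(f)} = \trace{\scottbr{w}}(\scottbr{f})$ follows from assembling the per-move equalities.
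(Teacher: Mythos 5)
Your plan correctly isolates the two halves of the statement and the normalization subtlety, but its central step is a genuine gap rather than bookkeeping. The heart of your argument is the claim that ``any contraction can be exhibited, up to the structural laws, as a composite of these primitive moves applied to $f$.'' That claim is essentially the whole content of the proposition, and your induction is not well-founded as stated: morphisms of $\freemarkov_\Sigma$ are (isomorphism classes of) hypergraph cospans, not inductively generated syntax trees, so ``how $f$ is assembled'' is not an induction parameter. For a generic non-signalling diagram --- say a single feedback wire whose output is produced deep inside a diagram all of whose boxes touch the feedback path --- none of your listed cases (outer composition with $h,k$; a tensor splitting $g \otimes f$; a splitting $w = u \otimes v$) applies, so the induction cannot make progress. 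What is missing is a combinatorial factorization lemma: since the hypergraph of $f$ is acyclic and has no directed path from $w$-inputs to $w$-outputs, partitioning the boxes according to whether they have a path to a $w$-output yields a cut, and hence a ``comb form'' $f = (h \otimes \id_w) \circ (\id_e \otimes \mathrm{swap}_{w,w}) \circ (g \otimes \id_w)$ with $g : m \to e \otimes w$ and $h : e \otimes w \to n$. Once you have this, no structural induction is needed at all: Tightening (Proposition~\ref{prop:tightening}), Superposition (Proposition~\ref{prop:superposition}) and Yanking (Proposition~\ref{prop:yanking}) evaluate both $\contr{w}$ of this form (using the proposition that $\contr{}$ satisfies these laws in $\freemarkov_\Sigma$) and $\trace{\scottbr{w}}$ of its image to $\scottbr{h} \circ \scottbr{g}$, and functoriality finishes the proof. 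A separate, smaller error: you invoke the disintegration \eqref{def:disint} inside $\freemarkov_\Sigma$ when proving preservation of non-signalling, but disintegrations require conditionals, which free Markov categories do not have (the paper conjectures exactly this); what exists combinatorially is only the non-signalling witness $f_s$ obtained by marginalizing the $n$-outputs, which fortunately is all that half of the statement needs.

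For comparison, the paper takes a different and much shorter route: strong induction on the number of contracted wires, with no analysis of the diagram's internal structure. The base case $w = 0$ is Vanishing (Proposition~\ref{prop:vanishing}); the inductive step splits off one wire via Associativity (Proposition~\ref{prop:associativity}), applied both to $\contr{}$ and to $\trace{}$, and invokes the induction hypothesis twice. This leans entirely on the preceding proposition (the trace laws for $\contr{}$ in $\freemarkov_\Sigma$, where the normalization functor argument you mention lives), at the cost of leaving the single-wire case rather implicit --- which is precisely the case that your argument, once repaired with the comb factorization lemma, would handle explicitly. So your approach is salvageable and in fact complements the paper's, but as written the decisive combinatorial step is asserted rather than proved.
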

\begin{proof}
	The first point is clear. For the second point we proceed by strong induction on $w$. If $w = 0$, then $f$ is of the form $g \otimes \id_I$. We are done by vanishing (\ref{prop:vanishing}).
	
	Now assume the statement for all $c < k+1$. Let $f : n + k + 1 \to m + k + 1$. 
	\[ \trace{\scottbr{k+1}}(\scottbr{f}) = \trace{\scottbr{k}}(\trace{\scottbr{1}}(\scottbr{f})) =
	\trace{\scottbr{k}}(\scottbr{\contr{1}(f)} =
	\scottbr{\contr{k}(\contr{1}(f))} = \scottbr{\contr{k+1}(f)} \]
	We applied the induction hypothesis twice, and associativity (\ref{prop:associativity}) twice.
\end{proof}

As a direct consequence of the previous proposition we obtain:
\begin{corollary}\label{propcontractionids}
Every cancellative atomic Markov category with conditionals satisfies all contraction identities.
\end{corollary}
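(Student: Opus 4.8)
The plan is to read the corollary off directly from Proposition~\ref{proptracesoundness}, once I confirm that the hypotheses ``cancellable, atomic, with conditionals'' are exactly what is needed to equip $\C$ with causal traces in the sense that proposition demands. First I would observe that Definition~\ref{def:ftrace} furnishes, on any atomic Markov category with conditionals, a causal-trace operation $\trace{W}(-)$ defined on non-signalling morphisms, and that Lemma~\ref{lem:atomic-contr} guarantees this operation is well-defined, i.e.\ independent of the chosen disintegration. The cancellability hypothesis secures the uniqueness of $f_s$ in the disintegration and underwrites the trace laws, so that the whole package constitutes a genuine causal trace satisfying Propositions~\ref{prop:tightening}--\ref{prop:yanking}. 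This step identifies $\C$ as a category with causal traces, which is precisely the standing assumption of Proposition~\ref{proptracesoundness}.

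Next I would unwind Definition~\ref{def:contractionid}. Suppose $C_1, C_2 : m \otimes w \to n \otimes w$ are non-signalling in $\freemarkov_\Sigma$ and that $\scottbr{C_1} = \scottbr{C_2}$ under an interpreting functor $\scottbr{-}$. By the first clause of Proposition~\ref{proptracesoundness}, both $\scottbr{C_1}$ and $\scottbr{C_2}$ are non-signalling in $\C$, so the causal trace applies to them, and by the second clause
\[ \scottbr{\contr{w}(C_i)} = \trace{\scottbr{w}}(\scottbr{C_i}), \qquad i = 1,2. \]
Because the causal trace is a well-defined \emph{function} of its non-signalling argument, the hypothesis $\scottbr{C_1} = \scottbr{C_2}$ forces $\trace{\scottbr{w}}(\scottbr{C_1}) = \trace{\scottbr{w}}(\scottbr{C_2})$, whence $\scottbr{\contr{w}(C_1)} = \scottbr{\contr{w}(C_2)}$. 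This is exactly the contraction identity required by Definition~\ref{def:contractionid}.

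The only genuine content lies in the first step, namely verifying that the stated hypotheses assemble into ``has causal traces''; this is where all the earlier machinery is invested (atomicity and conditionals for the construction, Lemma~\ref{lem:atomic-contr} for well-definedness, cancellability to rule out the degenerate object and to validate the laws). Granting that, the corollary is genuinely immediate: the deductive weight sits entirely in Proposition~\ref{proptracesoundness} together with the mere functionality of the trace, so I expect no computational obstacle. The one subtlety worth flagging is that ``satisfies all contraction identities'' quantifies simultaneously over all signatures $\Sigma$ and all interpreting functors $\scottbr{-}$; but since the argument above is uniform in $\Sigma$ and $\scottbr{-}$, it covers the entire quantifier range without any additional work.
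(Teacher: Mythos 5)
Your proposal is correct and follows exactly the paper's route: the paper presents this corollary as a direct consequence of Proposition~\ref{proptracesoundness}, with the implicit step (which you make explicit) that a cancellable atomic Markov category with conditionals has causal traces via Definition~\ref{def:ftrace}, Lemma~\ref{lem:atomic-contr}, and Propositions~\ref{prop:tightening}--\ref{prop:yanking}, after which equality of interpretations forces equality of traces by mere functionality. Your identification of where each hypothesis (atomicity, conditionals, cancellability) enters matches the paper's division of labor precisely.
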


\section{Notions of Combs}\label{sec:combs}

We recall various definitions of combs and refer to \cite{Hefford2022,Roman2020,Roman2020a} for reference.

\begin{definition}[Comb]
Let $\C$ be a symmetric monoidal category. A comb of type $(A,A') \to (B,B')$ is a triple $C = \combe E f g$ consisting of an object $E$ and morphisms $f : A \to E \otimes B$ and $g : E \otimes B' \to A'$. We will omit the subscript $E$ if it is clear from context. For a morphism $h : B \otimes K \to B' \otimes K'$, the comb insertion $C[h] : A \otimes K \to A' \otimes K'$ is defined as $C[h] = (f \otimes \id_K) ; (\id_E \otimes h) ; (g \otimes \id_{K'})$. The \emph{extension} of the comb $C$ is the joint morphism $C[\keyword{swap}_{B,B'}] : A \otimes B' \to A' \otimes B$. 
\end{definition}

\begin{definition}\label{defcombequivalence}
Two combs $C_1,C_2 : (A,A') \to (B,B')$ are called 
\begin{enumerate}
\item \emph{extensionally equivalent} if their extensions are equal
\item \emph{contextually equivalent} if for all $h : B \otimes K \to B' \otimes K'$, we have $C_1[h] = C_2[h]$
\item \emph{optically equivalent} if they are identified as elements of the coend
\[ \int^E \C(A,E \otimes B) \times \C(E \otimes B', A') \]
Concretely, this is the equivalence generated by `sliding' for all $s : E \to E'$
\[ \combe{E'}{f ; (s \otimes \id_B)}{g} \sim \combe{E}{f}{(s \otimes \id_{B'}) ; g}\]
\end{enumerate}
\end{definition}

\begin{proposition}[\cite{Hefford2022}]
Optic equivalence implies contextual equivalence. Contextual equivalence implies extensional equivalence. 
\end{proposition}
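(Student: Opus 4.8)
The plan is to treat the two implications separately: the second is essentially definitional, while the first contains all the real content. Everything takes place in an arbitrary symmetric monoidal category and uses only the interchange law (functoriality of $\otimes$); neither conditionals nor atomicity are required here.

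\emph{Contextual $\Rightarrow$ extensional.} The extension of a comb is by definition the single insertion $C[\keyword{swap}_{B,B'}]$. If $C_1$ and $C_2$ are contextually equivalent, then $C_1[h] = C_2[h]$ holds for \emph{every} $h : B \otimes K \to B' \otimes K'$; specializing to $K = B'$, $K' = B$, and $h = \keyword{swap}_{B,B'}$ yields $C_1[\keyword{swap}_{B,B'}] = C_2[\keyword{swap}_{B,B'}]$, which is exactly extensional equivalence. This direction is immediate.

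\emph{Optic $\Rightarrow$ contextual.} Optic equivalence is the equivalence relation \emph{generated} by the sliding moves. The key observation is that contextual equivalence is itself an equivalence relation: for each fixed $h$, equality of insertions is an equivalence relation, and the contextual relation is the intersection over all $h$, hence again reflexive, symmetric, and transitive. It therefore suffices to verify that each \emph{generating} slide produces contextually equivalent combs; transitivity and symmetry then extend this to the whole generated relation. So I reduce to a single slide: given $s : E \to E'$, $f : A \to E \otimes B$, and $g : E' \otimes B' \to A'$, I must show that $\combe{E'}{f ; (s \otimes \id_B)}{g}$ and $\combe{E}{f}{(s \otimes \id_{B'}) ; g}$ have equal insertions for every $h : B \otimes K \to B' \otimes K'$.

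The remaining computation is a routine interchange manipulation, best seen graphically. Expanding the first insertion and applying $(s \otimes \id_{B \otimes K}) ; (\id_{E'} \otimes h) = s \otimes h$ reduces it to $(f \otimes \id_K) ; (s \otimes h) ; (g \otimes \id_{K'})$; expanding the second and applying $(\id_E \otimes h) ; (s \otimes \id_{B' \otimes K'}) = s \otimes h$ reduces it to the \emph{same} morphism. Both steps are instances of functoriality of $\otimes$: the morphism $s$ slides freely past the inserted $h$ because they act on disjoint tensor factors. The main obstacle is thus conceptual rather than computational — one must notice that handling a coend-generated relation reduces to checking generators, which relies on contextual equivalence being a genuine equivalence relation. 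Once this reduction is in place, a single slide is manifestly invisible under insertion, and the graphical rendering, in which $s$ and $h$ occupy disjoint wires, makes the argument transparent.
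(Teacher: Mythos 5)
Your proof is correct. The paper itself gives no proof of this proposition---it is cited from the reference \cite{Hefford2022}---and your argument is precisely the standard one that the citation delegates: extensional equivalence is the special case of contextual equivalence at $h = \keyword{swap}_{B,B'}$ (with $K = B'$, $K' = B$), and optic equivalence reduces to checking the generating slides, each of which is invisible under insertion because $s$ and $h$ act on disjoint tensor factors (interchange). Your explicit observation that contextual equivalence is itself an equivalence relation, so that containment of the generators suffices, is exactly the point that makes the coend-generated relation tractable, and correctly requires nothing beyond symmetric monoidal structure.
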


\noindent The converses are false in general. Comb insertion need not be well-defined under extensional equivalence; counterexample atomicity. It is shown in \cite{Hefford2022} that all three notions coincide in two special cases: compact closed categories and cartesian categories. We generalize this as follows:

\begin{proposition}\label{prop:extensional-implies-contextual}
In any category with causal traces, extensional and contextual equivalence coincide. 
\end{proposition}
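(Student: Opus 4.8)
The plan is to prove that extensional equivalence implies contextual equivalence for combs, since the reverse implication is already provided by the cited Proposition from \cite{Hefford2022}. The central idea is that comb insertion $C[h]$ can be recovered from the extension $C[\keyword{swap}_{B,B'}]$ by means of the causal trace. First I would observe that for any $h : B \otimes K \to B' \otimes K'$, the composite $C[h]$ is obtained from the extension $E := C[\keyword{swap}_{B,B'}] : A \otimes B' \to A' \otimes B$ by feeding the output $B$ through $h$ and routing the result $B'$ back into the input $B'$ of the extension. Concretely, one forms the morphism $(\id_{A'} \otimes h) \circ (E \otimes \id_K) : A \otimes B' \otimes K \to A' \otimes B' \otimes K'$ and then contracts the $B'$ wire via the causal trace. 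The key verification is that this contraction reconstructs exactly the comb insertion $C[h] = (f \otimes \id_K);(\id_E \otimes h);(g \otimes \id_{K'})$.

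The main technical steps are as follows. I would first write out the extension of a comb $C = \combe E f g$ explicitly as $(g \otimes \id_B) \circ (\id_E \otimes \keyword{swap}) \circ (f \otimes \id_{B'})$ and confirm that it is non-signalling from $B'$ to $B$ (the output $B$ comes solely from $f$, which does not depend on $B'$), so that it admits a disintegration of the form \eqref{def:disint} with $f_s$ being the $B$-marginal. This non-signalling property is exactly what guarantees that the feedback loop we are about to close is causal and that the causal trace is defined. I would then exhibit the formula $C[h] = \trace{B'}\big((\id_{A' \otimes K'} \otimes \keyword{swap}) \circ (\id_{A'} \otimes h) \circ (E \otimes \id_K)\big)$, establishing that comb insertion is a fixed expression in the extension, using tightening (Proposition~\ref{prop:tightening}) to absorb the pre- and post-composition with $h$ and the wire permutations into the traced morphism. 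Since the causal trace and these operations depend only on $E = C[\keyword{swap}_{B,B'}]$, two extensionally equivalent combs $C_1, C_2$ (i.e.\ $C_1[\keyword{swap}] = C_2[\keyword{swap}]$) must yield $C_1[h] = C_2[h]$ for every $h$, which is precisely contextual equivalence.

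The hard part will be the bookkeeping of wire permutations and the careful argument that the traced morphism is itself non-signalling so that the causal trace is genuinely applicable, rather than just formally written down. In particular, closing the loop on $B'$ requires that the candidate morphism $(\id_{A'} \otimes h) \circ (E \otimes \id_K)$, suitably rearranged to have matching $B'$ on input and output, be non-signalling from the looped $B'$ back to itself; this follows from the non-signalling structure of $E$, but it must be checked that passing through $h$ does not introduce a dependency of the output $B'$ on the input $B'$. Since $h$ only sees the $B$-wire (which originates from $f_s$ and is independent of the input $B'$), the disintegration of $E$ propagates through the insertion and the loop remains causal. I expect the graphical manipulation to be routine once the non-signalling bookkeeping is settled, and I would ultimately invoke the uniqueness of the causal trace as the trace forced by the trace axioms (the remark following \eqref{def:ftrace}) to conclude that $C[h]$ is a well-defined function of the extension alone.
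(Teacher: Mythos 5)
Your construction is the same one the paper has in mind (its proof is a one-line appeal to the contraction pictured in Figure~\ref{figcontrexample}): exhibit the comb insertion $C[h]$ as a causal trace of a morphism assembled only from the extension $E = C[\keyword{swap}_{B,B'}]$ and $h$, check that the loop is traceable because the looped $B'$-output originates in $h$ applied to the $B$-output of $E$, which does not depend on the looped input, and conclude that extensionally equivalent combs have equal insertions; the converse direction is the cited result of \cite{Hefford2022} (or simply the observation that the extension is itself an insertion).

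There is, however, a genuine soft spot in how you propose to \emph{verify} the key identity $C[h] = \trace{B'}(\cdots)$. Tightening alone cannot establish it: tightening only moves morphisms of the form $u \otimes \id_{B'}$ in and out of the trace, and after pulling $f$, $h$ and $g$ across, what remains inside the trace is a pure permutation of wires; evaluating \emph{that} trace is exactly what yanking (Proposition~\ref{prop:yanking}), together with superposition (Proposition~\ref{prop:superposition}), is for --- the standard ``generalized yanking'' computation. More seriously, your fallback --- invoking the remark after \eqref{def:ftrace} that any trace is forced to agree with the disintegration formula --- is not available under the hypothesis of the proposition. A category ``with causal traces'' need not have conditionals (the paper conjectures that free Markov categories, which carry the contraction trace, do not), so non-signalling morphisms need not admit disintegrations there, and the formula \eqref{def:ftrace} is not even expressible. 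The statement is still true at this level of generality precisely because the axiomatic computation never needs a disintegration: once $E$ is substituted, the traced wire threads only through symmetries and into $g$, so tightening, superposition and yanking suffice. The fix is to replace the appeal to \eqref{def:ftrace} by that axiom-chase; your disintegration-based verification does go through in the setting where the proposition is actually applied (atomic Markov categories with conditionals, Theorem~\ref{thm:combs-nonsignalling}), but not as a proof of the proposition as stated.
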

\begin{proof}
We can compute the comb insertion from the extension using the causal trace from Figure \ref{figcontrexample}.
\end{proof}

\begin{theorem}\label{thm:combs-nonsignalling}
Let $\C$ be an atomic Markov category with conditionals. Then extensional and contextual equivalence coincide. Computing the extension defines a bijection between
\begin{enumerate}
\item morphisms $f : A \otimes B' \to A' \otimes B$ that are non-signalling 
\item contextual equivalence classes of combs $(A,A') \to (B,B')$
\end{enumerate}
\end{theorem}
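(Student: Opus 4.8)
The plan is to establish the claimed bijection by identifying both sides with a common object and checking the assignments are mutually inverse. First I would record the two directions of the correspondence. Given a comb $C = \combe E f g$, its extension $C[\keyword{swap}_{B,B'}] : A \otimes B' \to A' \otimes B$ is non-signalling from $B'$ to $B$: the $B$-output is produced by $f$ alone (with a copy of $E$ fed forward), so it does not depend on the $B'$-input, and the witnessing $f_s$ is the $B$-component of $f$. This shows that ``compute the extension'' indeed lands in the set of non-signalling morphisms, so the map from (2) to (1) is well-defined, provided it respects contextual equivalence --- which is immediate since extensional equivalence is exactly equality of extensions, and by Proposition~\ref{prop:extensional-implies-contextual} contextual and extensional equivalence coincide in $\C$.

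Next I would construct the inverse. Given a non-signalling $f : A \otimes B' \to A' \otimes B$, I use the disintegration form \eqref{def:disint} guaranteed by the non-signalling proposition: $f$ factors through a morphism producing $B$ (the unique $f_s$) and a part $f_p$ consuming $B'$. The key move is to read off an intensional comb $\combe E {f_1} {g}$ from this disintegration by taking $E$ to carry the internal state threaded from the first half to the second, so that reconnecting the hole via $\keyword{swap}$ reproduces $f$. I would then verify that inserting a morphism $h$ into this comb and computing agrees with the causal trace description from Figure~\ref{figcontrexample}, invoking Proposition~\ref{prop:extensional-implies-contextual} to express comb insertion through the trace on the extension.

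The two maps are then checked to be mutually inverse. Extension-then-disintegration returns the original non-signalling morphism by uniqueness of $f_s$ (using cancellability) together with the defining factorization, and disintegration-then-extension returns the same contextual class because any two combs with equal extension are contextually equivalent by Proposition~\ref{prop:extensional-implies-contextual}. The coincidence of extensional and contextual equivalence in the statement is precisely that same proposition, so I would state it up front and reuse it throughout.

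The hard part will be the well-definedness and injectivity on contextual classes, i.e. showing that distinct-looking disintegrations of the same $f$ give contextually equivalent combs and that equal extensions force contextual equivalence. This is exactly where atomicity is essential: the argument rests on Lemma~\ref{lem:atomic-contr} (which underwrites that the causal trace is independent of the chosen disintegration) and hence on Proposition~\ref{prop:extensional-implies-contextual}. I expect the main obstacle to be bookkeeping the internal state object $E$ and checking that the passage through the trace is compatible with changing the disintegration; the non-signalling condition and the causal trace's independence from disintegration are the tools that make this go through, whereas in a non-atomic category such as $\borelstoch$ it would fail.
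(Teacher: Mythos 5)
Your proposal is correct and follows essentially the same route as the paper: extensions of combs are non-signalling, a disintegration of a non-signalling morphism yields a comb whose extension is the original morphism, and Proposition~\ref{prop:extensional-implies-contextual} (resting on the causal trace, hence on atomicity via Lemma~\ref{lem:atomic-contr}) gives both the coincidence of extensional and contextual equivalence and the uniqueness of the comb up to contextual equivalence. The only slip is cosmetic: you swapped the names of the two roundtrips (it is disintegration-then-extension that recovers the morphism on the nose, and extension-then-disintegration that recovers the comb only up to contextual equivalence), but your justifications are attached to the right facts.
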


\noindent This theorem generalizes the examples covered by \cite{Hefford2022} to include many common Markov categories without assuming an embedding in a compact-closed category. The case of optic equivalence requires further structure.

\begin{proposition}\label{prop:univ-dil-optic}
If $\C$ has universal dilations in the sense of \cite{HoughtonLarsen2021}, then extensional and optic equivalence coincide. 
\end{proposition}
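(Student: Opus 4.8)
The plan is to prove that optic equivalence coincides with extensional equivalence by showing that, under the assumption of universal dilations, any two combs with equal extensions are already identified as elements of the coend defining optic equivalence. Since we already know from Proposition~\ref{prop:extensional-implies-contextual} (together with Theorem~\ref{thm:combs-nonsignalling}) that extensional and contextual equivalence coincide in this setting, and since optic equivalence always implies contextual equivalence by the cited result of \cite{Hefford2022}, the only nontrivial direction to establish is that \emph{extensional} (equivalently, contextual) equivalence implies \emph{optic} equivalence. I would first unpack what the universal dilation of $f : A \to E \otimes B$ provides: a canonical factorization through which every other dilation of the same marginal factors via a unique deterministic sliding morphism. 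This is precisely the structure needed to collapse the coend, because the sliding relation in the definition of optic equivalence is generated by exactly such reindexing maps $s : E \to E'$.

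First I would take two combs $\combe{E_1}{f_1}{g_1}$ and $\combe{E_2}{f_2}{g_2}$ of type $(A,A') \to (B,B')$ whose extensions agree. The extension records the marginal behaviour on $B$ together with the joint input-output correlation, so equal extensions mean in particular that $f_1$ and $f_2$ have the same $B$-marginal when $E$ is discarded. Next I would invoke the universal dilation of this common marginal $A \to B$: both $f_1$ and $f_2$ are dilations of it (the extra wire $E_i$ being the dilating object), so by universality each factors uniquely through the universal dilation $u : A \to U \otimes B$ via deterministic morphisms $s_i : U \to E_i$. This exhibits both combs as slides of a single common comb built on $U$, namely $\combe{U}{u}{g_i'}$ where $g_i'$ incorporates the matching factorization on the $g_i$ side. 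The key point is then to verify that the two residual maps on the codomain side $g_1', g_2' : U \otimes B' \to A'$ actually coincide; this is where equality of extensions (not just of marginals) is used, via the non-signalling/causality structure that the universal dilation enforces.

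The main obstacle I expect is precisely this last verification: showing that once both combs are pulled back to the universal dilating object $U$, the two codomain legs agree on the relevant support. Equality of extensions only constrains the combs after discarding $E$ and bending the $B, B'$ wires, so I must argue that the universal property transfers this constraint to an \emph{equality of morphisms} $U \otimes B' \to A'$ rather than merely an almost-sure or contextual equality. I anticipate needing atomicity and conditionals here to upgrade a comparison that holds on points or almost surely into a genuine equality, exploiting that the universal dilation is in a suitable sense the `largest' or `most correlated' dilation so that almost-sure equality against it becomes strict equality. Once both combs are rewritten as slides of the common comb on $U$, the zig-zag of sliding moves $\combe{E_1}{f_1}{g_1} \sim \combe{U}{u}{g_1'} = \combe{U}{u}{g_2'} \sim \combe{E_2}{f_2}{g_2}$ directly exhibits them as equal in the coend, completing the argument. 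I would close by remarking that this recovers and sharpens the comparison of \cite{Hefford2022}, now without any compact-closed embedding, and connects the optic picture back to the universal-dilation perspective of \cite{HoughtonLarsen2021} as promised in the introduction.
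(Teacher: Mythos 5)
Your overall skeleton matches the paper's proof: pass to the common marginal $p : A \to B$ (using cancellability), factor both $f_i$ through a universal dilation $\phi : A \to E \otimes B$ of $p$ via morphisms $s_i : E \to E_i$, show that the two transported codomain legs $g_i \circ (s_i \otimes \id_{B'}) : E \otimes B' \to A'$ coincide, and conclude by a zig-zag of sliding moves. However, at exactly the step you yourself flag as the main obstacle, your proposal has a genuine gap, and the route you sketch for closing it would not work: you propose to use atomicity and conditionals to upgrade an almost-sure comparison into an equality, but the proposition assumes neither atomicity nor conditionals --- its only hypothesis is universal dilations (plus cancellability) --- so nothing of that kind is available. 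For the same reason your opening framing via Proposition~\ref{prop:extensional-implies-contextual} and Theorem~\ref{thm:combs-nonsignalling} imports hypotheses that are absent here; it is also unnecessary, since optic equivalence always implies contextual, which implies extensional, so the one direction to prove is that extensional implies optic, and that must be done directly.

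The missing verification is closed purely by the uniqueness clause of the universal property, once you notice that it quantifies over dilations with both an auxiliary input and an auxiliary output, $\Pi : X \otimes W' \to W \otimes Y$. The extension $C_i[\keyword{swap}_{B,B'}] : A \otimes B' \to A' \otimes B$ is itself such a dilation of $p$ (take $W' = B'$ and $W = A'$). Substituting the factorization $f_i = (s_i \otimes \id_B) \circ \phi$ into the extension shows that $\sigma_i = g_i \circ (s_i \otimes \id_{B'})$ exhibits a factorization of the $i$-th extension through $\phi$. Since the two extensions are equal by hypothesis, uniqueness of the factorization forces $g_1 \circ (s_1 \otimes \id_{B'}) = g_2 \circ (s_2 \otimes \id_{B'})$ outright --- no almost-sure reasoning, supports, or ``largest dilation'' heuristics enter. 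With that equality in hand, your concluding zig-zag $\combe{E_1}{f_1}{g_1} \sim \combe{E}{\phi}{g_1 \circ (s_1 \otimes \id_{B'})} = \combe{E}{\phi}{g_2 \circ (s_2 \otimes \id_{B'})} \sim \combe{E_2}{f_2}{g_2}$ is exactly the paper's. One further small inaccuracy: the factorization morphisms $s_i$ need not be deterministic, and nothing requires them to be, since the sliding relation is generated by arbitrary $s : E \to E'$.
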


\noindent Because $\finstoch$ has universal dilations (\cite[Theorem~2.4.6]{HoughtonLarsen2021}), this means that for $\finstoch$, all notions of combs coincide. 

\section{Conclusions and Future Work}\label{sec:related}

We have shown that the contraction identities hold in every cancellative atomic Markov category with conditionals, and used this to develop a theory of causal traces and relate various notions of comb equivalence. Our work leaves an array of interesting open questions, including about converses of our results:

When does a Markov category embed into a traced category? It is known that every partially traced category embeds in a traced one \cite{Bagnol2015}, but our axioms for the causal trace do not imply all partial trace axioms. We conjecture that the difference is similar to what Houghton-Larsen achieves with his construction of \emph{causal channels} in \cite[Section~4.1]{HoughtonLarsen2021}.

To what extent is atomicity a necessary assumption? Does an embedding in a traced category imply atomicity? Atomicity is intimately related to supports \cite{Fritz2023a}, and in turn to universal dilations, though the precise relationship remains to be clarified.

Our developments in Section~\ref{secfreemarkovtrace} suggest that the information-flow properties of free Markov categories are an interesting area of study beyond this work: We conjecture that these categories are atomic, and have supports but no conditionals. We also believe that free Markov categories embed in free hypergraph categories, even though this point was sidestepped in \cite{Fritz2023b}.

We would also like to explore if failure of atomicity poses formal challenges in causal inference, given that the approach to causal inference in \cite{Jacobs2019} relied on combs, or if the approach can be refined to not rely on them.

\bibliographystyle{eptcs}
\bibliography{main}

\section{Appendix}\label{sec:appendix}

\begin{proof}[Proof of Proposition~\ref{prop:trace-discardable}]
	The trace of a discardable morphism $f$ need not itself be discardable. For example, in the compact closed category $\mat(\R^+)$, the trace of $\id_W : W \to W$ is the scalar $|W| : I \to I$ which is not normalized (i.e. equal to $1$). On the other hand, if $f$ is non-signalling, we obtain 
	\[ \tikzfig{discarding} \]
\end{proof}

\subsection{Appendix to Section~\ref{sec:atomicmc}}\label{app:atomicmc}

\begin{proof}[Proof of Proposition~\ref{prop:atomicmorphisms}]
The statement for morphisms of full support is immediate. For a deterministic morphism $p$, we reason straightforwardly
\[ \tikzfig{det-atomic} \]
For the third point, let $g$ be deterministic and $p$ atomic, and assume
\[ \tikzfig{det-atomic-comp} \]
By causality \cite[A.14]{Fritz2023}, we may strengthen this equation and marginalize to obtain
\[ \tikzfig{det-atomic-comp-2} \]
Now, we use atomicity of $p$, postcompose the two rightmost wires with $g$ and use determinism to simplify
\[ \tikzfig{det-atomic-comp-3} \]
\end{proof}

\begin{example}\label{ex:atomic-no-composition}
Atomic morphisms need not be closed under composition: In $\borelstoch$, let $X=[0,1]$ and define the morphism $p : X \otimes 2 \to X$ by
\[ p(x,c) = \begin{cases}
	\delta_x, &c=0 \\
	\nu, &c=1
\end{cases}\]
Then the atoms of $p$ are all of $X$, i.e. $p$ is atomic. However, for all deterministic states $\sigma : I \to X \otimes 2$ of the form $\sigma = \delta_{(x,1)}$, we have that $p \circ \sigma = \nu$ is not atomic. 
\end{example}

\begin{proof}[Proof of Lemma~\ref{lem:disint-indep}]
	First, by marginalizing $Y$ and cancellability of $W$, we conclude that $f_1 = f_2$ and write $f$ indiscriminately. Now we show that the morphisms $c_i = \del_X \otimes g_i$ (for $i=1,2$) and are $\phi \otimes \phi$-almost surely equal, where $\phi = (\id_X \otimes f) \circ \Delta_X$. 
	\[ \tikzfig{lem-pp-as} \]
	Applying atomicity to $\phi$, we obtain that that $c_1, c_2$ are also $(\Delta \circ \phi)$-almost surely equal, i.e.
	\[ \tikzfig{lem-cp-as} \]
	From this, we obtain the desired conclusion by marginalizing the four wires on the right.
\end{proof}

\begin{proof}[Proof of Lemma~\ref{lem:atomic-contr}]
	For $i=1,2$, disintegrate $f_i$ as 
	\[ \tikzfig{f-disint}\]
	Then
	\[ \tikzfig{f-disint-assumption}\]
	So by Lemma~\ref{lem:disint-indep}, we conclude
	\[ \tikzfig{f-disint-conclusion}\]
\end{proof}

\subsection{Appendix to Section~\ref{sec:causaltraces-laws}}\label{app:traceproofs}

\begin{proof}[Proof of Tightening (Proposition~\ref{prop:tightening})]
	Write $k$ for the composite $(h \otimes \id_W) \circ f \circ (g \otimes \id_W)$; it is easy to see that $k$ is traceable. Using the Bayesian inverse $(f_s)^\dagger_g$, we obtain the following disintegration
	\[ \tikzfig{tightening-fact} \]
	Using that disintegration, we compute as desired
	\[ \tikzfig{tightening-proof} \]
\end{proof}

\begin{proof}[Proof of Sliding (Proposition~\ref{prop:sliding})]
	Traceability is straightforward. Using the Bayesian inverse $g^\dagger_{f_s}$, we establish the following disintegrations:
	\[ \tikzfig{sliding-disint} \]
	Using these disintegrations, we show
	\[ \tikzfig{sliding-proof} \]
\end{proof}

\begin{proof}[Proof of Associativity (Proposition~\ref{prop:associativity})]	
Assume that $f : X \otimes (U \otimes V) \to Y \otimes (U \otimes V)$ is non-signalling in $U \otimes V$; that is
	\[ \tikzfig{coherence-nonsig} \]
	hence $f$ is in particular non-signalling from $V$ to $V$. Choose a disintegration $f_p, f_s$, and condition $f_s$ further, to obtain
	\[ \tikzfig{coherence-disint} \]
	Then we can give the following disintegration with respect to $V$
	\[ \tikzfig{coherence-disint-2} \]
	Using this disintegration, we obtain $\trace V(f)$ as
	\[ \tikzfig{coherence-proof-1} \]
	Of this, we can in turn compute the trace
	\[ \tikzfig{coherence-proof-2} \]
\end{proof}

\begin{proof}[Proof of Superposition (Proposition~\ref{prop:superposition})]
	We make use of the following disintegration
	\[ \tikzfig{strength-disint} \]
	to obtain
	\[ \tikzfig{strength-proof} \]
\end{proof}

\begin{proof}[Proof of Yanking (Proposition~\ref{prop:yanking})]
	The symmetry has the following disintegration, hence
	\[ \tikzfig{symmetry-disint} \]
\end{proof}

\subsection{Appendix to Section \ref{secfreemarkovtrace}} \label{app:freemarkov}

We recall the construction of free Markov categories \cite{Fritz2023b}. 
\begin{definition}[\cite{Fritz2023b}]
	The category $\cat I$ is defined as follows.
	\begin{itemize}
		\item Objects are pairs of natural numbers $(m,n) \in \N \times \N$, and there is an extra object $\ast$.
		\item The only non-identity morphisms are $\hgin_1,\cdots,\hgin_m, \hgout_1,\cdots, \hgout_n: (m,n) \to \ast$.
	\end{itemize}
	A hypergraph is a functor $\cat I \to \sets$. Hypergraphs form a functor category $\hyp$. 
\end{definition}
We also use the following notation for a hypergraph $G : \cat I \to \sets$.
\begin{itemize}
	\item $W(G) = G(\ast)$ is the set of wires.
	\item $B_{m,n}(G)$ is the set of boxes with $m$ inputs and $n$ outputs. $B(G) = \bigsqcup_{m,n \in \N} B_{m,n}(G)$ is the set of all boxes. \item We abbreviate $G(\hgin_i)$ to $\hgin_i$ and $G(\hgout_i)$ to $\hgout_i$. These assign the $i$th input/output wire to each box.
	\item For $b \in B_{m,n}(G)$, $w \in W(G)$ 
	\begin{align*}
		\hgin(b,w) &= \{\hgin_i(b) : i \in \{1,\cdots,,m\}\} \\
		\hgout(b,w) &= \{\hgout_i(b) : i \in \{1,\cdots,n\}\}	
	\end{align*}
	These numbers tell how many times a wire is the input/output of a box.
\end{itemize}

\begin{definition}[\cite{Fritz2023b}]
	A hypergraph is finite if the set of wires $W(G)$ and the set of boxes $B(G)$ is finite. We denote the subcategory of finite hypergraphs by $\finhyp$.
\end{definition}

Every monoidal signature $\Sigma$ gives rise to a finite hypergraph which we also denote by $\Sigma$. A labelling of wires and boxes in a hypergraph $G$ is a hypergraph homomorphism (natural transformation) to $\Sigma$.

\begin{definition}[\cite{Fritz2023b}]
	Markov string diagrams over the monoidal signature $\Sigma$ are (isomorphism classes of) cospans in the slice category $\finhyp/\Sigma$, that is of the form
	\begin{center}
		\begin{tikzcd}
			\underline{m} \arrow[r,"p"] \arrow[dr] & G  \arrow[d] & \underline{n} \arrow[l,"q",swap]  \arrow[dl] \\
			& \Sigma 		
		\end{tikzcd}
	\end{center}
	in $\finhyp$, such that
	\begin{enumerate}
		\item $\underline{m}$ and $\underline{n}$ are discrete hypergraphs ($B(\underline{m}) = B(\underline{n}) =\emptyset$).
		\item G is acyclic, i.e. it contains no cycles. A path is a finite, alternating sequence of wires and boxes $(w_1, b_1, \cdots w_n, b_n, w_{n+1})$ such that $\hgin(b_i,w_i) > 0$, and $\hgout(b_i,w_{i+1} > 0)$ for all $1 \le i \le n$. A cycle is a path that additionally satisfies $w_1 = w_{n+1}$.	
		\item The cospan satisfies left monogamy: for all wires $w \in W(G)$
		\[ |p^{-1}(w)| + \sum_{b \in B(G)} \hgout(b,w) = 1 \]
		That is, every wire is either connected to exactly one input port or is the output of a single box.
		\item There are no eliminable boxes. A box is eliminable if none of its output wires are connected to an output port or to the input of a box. That is, an eliminable box $b$ satisfies for all $w \in W(G)$
		\[ \hgout(b,w) > 0 \Longrightarrow q^{-1}(w) = 0 \wedge \forall b' \in B(G).\hgin(b',w) = 0	 \]
	\end{enumerate}
\end{definition}
\noindent
We leave the labelling implicit and write the cospan as $\underline{m} \xrightarrow{p} G \xleftarrow{q} \underline{n}$.

\begin{proposition}[\cite{Fritz2023b}]
	The free Markov category $\freemarkov_\Sigma$ over a monoidal signature $\Sigma$ can be constructed as follows:
	\begin{enumerate}
		\item Objects are homomorphisms $\underline{m} \to \Sigma$, that is an $m$-long list of types in $\Sigma$.
		\item Morphisms are (isomorphism classes of) Markov string diagrams, which compose by pushout and subsequent normalisation (elimination of eliminable boxes).
		\item The tensor is given by coproduct.
		\item Copy and delete are represented by the cospans
		\[ \tikzfig{copydel-freemarkov}\]
	\end{enumerate}
\end{proposition}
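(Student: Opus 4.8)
The plan is to verify that the construction yields a Markov category and then that it enjoys the universal property characterizing the \emph{free} one among Markov categories receiving an interpretation of the signature $\Sigma$. Since the statement merely recollects \cite{Fritz2023b}, the argument is entirely structural, and I sketch the shape it takes rather than the combinatorial details.

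First I would organize the category structure so as to isolate the combinatorics. Dropping the no-eliminable-box condition gives the free gs-monoidal category $\cat{FreeGS}_\Sigma$, whose morphisms are acyclic, left-monogamous cospans $\underline m \to G \leftarrow \underline n$ composed by plain pushout in $\finhyp/\Sigma$ and tensored by coproduct; this is manifestly a symmetric monoidal category, since pushouts compose associatively up to canonical isomorphism. Normalization --- iteratively deleting eliminable boxes --- is then an identity-on-objects reflection $\cat{FreeGS}_\Sigma \to \freemarkov_\Sigma$, as recorded in \cite{Fritz2023b}, so composition and associativity in $\freemarkov_\Sigma$ are inherited from $\cat{FreeGS}_\Sigma$ via the reflector. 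The genuinely combinatorial points left to check are that pushout preserves acyclicity (a new cycle would have to traverse the glued boundary $\underline n$, which sits as outputs of the acyclic $G$ and inputs of the acyclic $H$, a contradiction) and left monogamy (verifiable wirewise, since gluing only affects boundary wires), together with confluence and termination of the eliminable-box rewriting.

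Next I would supply the comonoid structure. The maps $\Delta$ and $\del$ are the displayed cospans, and the commutative-comonoid laws hold as isomorphisms of cospans. Discardability of every morphism --- equivalently naturality of $\del$ --- follows because postcomposing a diagram with $\del$ on all outputs renders every box eliminable, so normalization collapses it to the bare delete on its inputs. This certifies that $\freemarkov_\Sigma$ is a Markov category.

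Finally I would establish the universal property. Given a Markov category $\C$ with an interpretation of the types and boxes of $\Sigma$, define $\scottbr{-} : \freemarkov_\Sigma \to \C$ by reading a string diagram as a morphism of $\C$: objects from wires, generators from boxes, and copy/delete from the comonoid structure wherever wires branch or terminate, evaluated along any topological ordering of the boxes of the acyclic hypergraph. The crux is that this is independent of the chosen ordering --- a consequence of the interchange law in the symmetric monoidal $\C$ --- and that it respects normalization, since eliminable boxes feed only deletions and so interpret trivially by discardability. Functoriality, monoidality, and preservation of copy/delete are then routine, and uniqueness holds because every Markov string diagram is generated from boxes, copy, delete, and the monoidal operations, on which any Markov functor is forced to agree. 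I expect the main obstacle to be exactly this well-definedness of composition and interpretation: confluence and termination of the elimination step, and the resulting independence of $\scottbr{-}$ from how the diagram is sequentialized.
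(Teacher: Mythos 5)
This proposition is stated in the paper without proof --- it is imported verbatim from \cite{Fritz2023b} --- so there is no internal proof to compare against; your sketch is a faithful reconstruction of that reference's actual route. In particular, your key structural move --- building the free gs-monoidal category $\cat{FreeGS}_\Sigma$ from plain pushout-composed cospans and treating normalization as an identity-on-objects gs-monoidal reflection $\cat{FreeGS}_\Sigma \to \freemarkov_\Sigma$ --- is precisely the device (Lemma 6.5 of \cite{Fritz2023b}) that this paper itself invokes later when proving the causal trace axioms for $\freemarkov_\Sigma$, and the obstacles you flag (pushouts preserving acyclicity and left monogamy, termination and confluence of box elimination, independence of the interpretation from sequentialization) are exactly the points carrying the combinatorial weight in the cited proof; confluence, incidentally, is immediate once one observes that the normal form has an order-independent description, namely keeping exactly the boxes admitting a directed path to an output port.
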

To ease notation we write $f : n \to m$ for a morphism $f : (\underline{m} \to \Sigma) \to (\underline{n} \to \Sigma)$, thus leaving the labelling implicit.

\subsection{Appendix to Section~\ref{sec:combs}}\label{app:combs}

\begin{proof}[Proof of Theorem~\ref{thm:combs-nonsignalling}]
For each comb $\comb f g : (A,A') \to (B,B')$, its extension is a non-signalling morphism $A \otimes B' \to A' \otimes B$. Conversely, from a non-signalling morphism $f : A \otimes B' \to A' \otimes B$, we construct a disintegration
\[ \tikzfig{mc-comb-disint} \]
which defines a comb with environment $E=A \otimes B$ whose extension is $f$. By Proposition~\ref{prop:extensional-implies-contextual}, the resulting comb is unique up to contextual equivalence.
\end{proof}

\paragraph{Universal Dilations}

In the terminology of \cite[Definition~2.4.1]{HoughtonLarsen2021}, a \emph{universal dilation} of a morphism $p : X \to Y$ is another morphism $\phi : X \to E \otimes Y$ satisfying
\[ \tikzfig{univ-dil-1} \]
such that for all $\Pi : X \otimes W' \to W \otimes Y$  
\[ \tikzfig{univ-dil-2} \]

\begin{proof}[Proof of Proposition~\ref{prop:univ-dil-optic}]
Let $\combe {E_1} {f_1} {g_1}, \combe {E_2} {f_2} {g_2} : (A,A') \to (B,B')$ be two combs which are extensionally equivalent. Define using cancellability $p : A \to B$ as the common morphism
\[ \tikzfig{univ-dil-3} \]
and choose a universal dilation $\phi : A \to E \otimes B$ of $p$. Using universality, the $f_i$ must factor through $\phi$ as
\[ \tikzfig{univ-dil-4} \]
By uniqueness of factorization, we have
\[ \tikzfig{univ-dil-5} \]
Now we reason modulo optic equivalence that
\[ \tikzfig{univ-dil-6} \]
\end{proof}

\end{document}